\documentclass{amsart}
\usepackage[final]{showkeys}
\usepackage{hyperref}
\usepackage{graphics}
\usepackage{amssymb, amsmath}
\usepackage{amsmath,amsthm}


\newtheorem{theorem}{Theorem}
\newtheorem{lemma}[theorem]{Lemma}
\newtheorem{proposition}[theorem]{Proposition}
\newtheorem{corollary}[theorem]{Corollary}

\unitlength 1mm

\def \beq {\begin{eqnarray*}}
\def \eeq {\end{eqnarray*}}
\def \beqn {\begin{eqnarray}}
\def \eeqn {\end{eqnarray}}
\def \bequa {\begin{equation}}
\def \eequa {\end{equation}}

\date{\today}
\title{Optimal estimates
from below for biharmonic Green functions}
\author{Hans-Christoph Grunau}
\address{Fakult\"at f\"ur Mathematik, Otto-von-Guericke-Universit\"at, Postfach 4120,
39016 \linebreak Magdeburg, Germany}
\email{hans-christoph.grunau@ovgu.de}

\author{Fr\'ed\'eric Robert}
\address{Institut \'{E}lie Cartan,
Universit\'{e} Henri Poincar\'{e} Nancy 1,
BP 70239,
54506 Vand{\oe}uvre-l\`{e}s-Nancy Cedex, France}
\email{Frederic.Robert@iecn.u-nancy.fr}

\author{Guido Sweers}
\address{Mathematisches Institut, Universit\"at zu K\"oln,
Weyertal 86-90, 50931 K\"oln, Germany}
\email{gsweers@math.uni-koeln.de}

\subjclass[2000]{Primary 35B51; Secondary 35J40, 35A08}

\begin{document}

\begin{abstract}
Optimal pointwise estimates are derived for the biharmonic Green
function under Dirichlet boundary conditions in arbitrary $C^{4,\gamma}$-smooth domains. 
Maximum principles do not exist for fourth
order elliptic equations and the Green function may change sign. It
prevents using a Harnack inequality as for second order problems and hence
complicates the derivation of optimal estimates. The present estimate is obtained
by an asymptotic analysis. The estimate shows that this Green function is positive
near the singularity and that a possible negative part is small in
the sense that it is bounded by the product of the squared distances to the
boundary.
\end{abstract}

\maketitle

\section{Introduction}
\label{sect:Introduction}

It is well-known that the Green function $G(x,y)$
for second order elliptic equations on bounded domains can be
estimated from above and from below by positive multiples of the
same (positive) function, which is explicitly given in terms of the
distances to the boundary, $d(x),d(y)$, and the distance
$\left|x-y\right|$. See for example \cite{ChungZhao}. The behaviour
of the biharmonic Green function for Dirichlet boundary conditions
should be somehow similar but will
have two crucial distinctions. The singularity of course is of lower
order, namely $n-4$ instead of $n-2$ with $n$ the dimension, but a
more serious distinction is the fact that the biharmonic Green
function is not everywhere positive for most domains. Indeed, the
few known domains with a biharmonic Green function of a fixed
positive sign are balls in arbitrary dimensions, small
perturbations of those balls and of some lima\c{c}ons in $2$ dimensions. See
respectively \cite{Boggio,GrunauSweers1,GrunauRobert} and
\cite{DallacquaSweers3,DallacquaSweers4}.
The results in \cite{DallacquaSweers4} extend and correct  \cite{Hadamard}.

It has been observed numerically on domains with a sign changing
biharmonic Green function that the negative part is rather small
and that it is also not located near the singularity. The aim of
this paper is to give optimal estimates from below. Previous results
concerning smallness of the negative part have been obtained in
\cite{GrunauRobert} for $n\ge 3$ and \cite{DallacquaMeisterSweers}
for $n=2$. With the estimates for the absolute value of that Green
function in \cite{DallacquaSweersJDE} these previously known
estimates are, when $n>4$, as follows:
\begin{equation}
-c \, d(x)^2 d(y)^2 \le G_{\Omega}(x,y) \le c^*  \left|x-y\right|^{4-n}\min\left(1,\frac{d(x)^2 d(y)^2}{\left|x-y\right|^{4}}\right)\label{oldest}
\end{equation}
for all $x,y\in \Omega\subset\mathbb{R}^n$ and where $c,c^*$ are some positive constants only depending on the domain. The distance of $x$ to
the boundary $\partial\Omega$ is defined by
\begin{equation*}d(x):=\inf\left\{\left|x-x^*\right|; x^*\in\partial\Omega\right\},
\end{equation*} 
and $G_{\Omega}$ denotes the said Green function. Let us remind the
reader that the biharmonic Green function $G_{\Omega}$ is such that
\begin{equation*} u(x)=\int_{\Omega}
G_{\Omega}(x,y)~f(y)~dy
\end{equation*}
solves
\begin{equation} \left\{\begin{array}{cr}
\Delta^2 u = f & \text{ in } \Omega, \\[1mm]
u =\left|\nabla u\right| = 0 & \text{on } \partial\Omega .
\end{array}\right. \label{sys}
\end{equation}
In (\ref{oldest}) the dimension is restricted to $n > 4$. As has
been shown in \cite{GrunauRobert} for $n=3,4$ and in
\cite{DallacquaMeisterSweers} for $n=2$, the estimate from below in (\ref{oldest})
holds in all dimensions. An estimate from above has also been proved
for $n\le 4$ but the formula for that estimate is different from
(\ref{oldest}). Those estimates can be found in
\cite{DallacquaSweersJDE} and contain the function $H_{\Omega}$ in
(\ref{eq:twosidedestimatefun}).

The main result is the estimate from below in the following theorem.
For the sake of completeness we include the estimate from above.

\begin{theorem}\label{thm:optimalestimate}
Let  $\Omega\subset \mathbb{R}^n$ $(n\ge 2)$ be a bounded
$C^{4,\gamma}$-smooth domain. Let $G_{\Omega}$ denote the biharmonic
Green function in $\Omega$ for (\ref{sys}). Then there exist
constants $c_1\ge 0$, $c_2 >0$, depending on the domain
$\Omega$, such that we have the following Green function estimate:
\begin{equation}
c_2^{-1}~ H_{\Omega} (x,y)\le G_{\Omega} (x,y)+c_1 d(x)^2d(y)^2\le c_2~ H_{\Omega} (x,y)\label{eq:twosidedestimate}
\end{equation}
for all $ x,y\in\Omega$, where
\begin{equation}
H_{\Omega} (x,y):=\left\{ \begin{array}{l}
\displaystyle   |x-y|^{4-n} \min\left\{
1,\frac{d(x)^2d(y)^2}{|x-y|^4}\right\}\hfill \mbox{\ if\ } n>4,\hspace{-8mm}
\\[4mm]
\displaystyle   \log\left(1+\frac{d(x)^2d(y)^2}{|x-y|^4} \right)\hfill \mbox{\ if\ } n=4,\hspace{-8mm}
\\[4mm]
\displaystyle
d(x)^{2-n/2} d(y)^{2-n/2}  \min\left\{
1,\frac{d(x)^{n/2} d(y)^{n/2} }{|x-y|^n}
\right\}\\[4mm]
\hspace{35mm}\hfill \mbox{\ if\ } n=2,3.\hspace{-8mm}
\end{array}
\right.\label{eq:twosidedestimatefun}
\end{equation}
\end{theorem}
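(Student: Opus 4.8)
\emph{Reduction and the macroscopic regime.} The upper inequality in \eqref{eq:twosidedestimate} requires nothing new: by \cite{DallacquaSweersJDE} one has $|G_\Omega(x,y)|\le c^*\,H_\Omega(x,y)$, while an elementary case distinction on the $\min$ in \eqref{eq:twosidedestimatefun} — using only $d(\cdot)\le\operatorname{diam}\Omega$ and $|x-y|\le\operatorname{diam}\Omega$ — gives $d(x)^2d(y)^2\le C(\Omega)\,H_\Omega(x,y)$ on all of $\Omega\times\Omega$; adding the two yields $G_\Omega+c_1d(x)^2d(y)^2\le(c^*+c_1C)H_\Omega$. So everything is in the lower inequality, which I would split at a threshold $\varepsilon_0>0$ according to the size of $|x-y|$. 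On the macroscopic range $|x-y|\ge\varepsilon_0$ the $\min$-term is harmless: whichever alternative is active, $\varepsilon_0\le|x-y|\le\operatorname{diam}\Omega$ forces $d(x)^2d(y)^2\ge c(\varepsilon_0)\,H_\Omega(x,y)$, and together with $G_\Omega\ge-c^*H_\Omega$ this gives $G_\Omega+c_1d(x)^2d(y)^2\ge(c_1c(\varepsilon_0)-c^*)H_\Omega\ge c_2^{-1}H_\Omega$ once $c_1$ is large. No asymptotics enter here; it is exactly in this range, and nowhere else, that the correction $c_1d(x)^2d(y)^2$ does work — it absorbs whatever negative part $G_\Omega$ may have, consistent with the numerical evidence that this part sits away from the singularity.

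\emph{The microscopic regime: blow-up.} For $|x-y|<\varepsilon_0$ one has $c_1d(x)^2d(y)^2/H_\Omega\sim c_1|x-y|^n$ (up to dimensional powers), which is negligible, so the task becomes $G_\Omega(x,y)\ge c_2^{-1}H_\Omega(x,y)$ for $|x-y|$ small — in particular positivity of $G_\Omega$ near the diagonal. I would prove this by contradiction and rescaling. If it failed there would be a sequence $(x_k,y_k)$, $|x_k-y_k|\to0$, with $G_\Omega(x_k,y_k)\le\tfrac1k H_\Omega(x_k,y_k)$. Put $r_k:=\max\{d(x_k),d(y_k),|x_k-y_k|\}\to0$, pick $z_k\in\partial\Omega$ nearest $x_k$, rescale $\tilde\Omega_k:=r_k^{-1}(\Omega-z_k)$, $\tilde x_k:=r_k^{-1}(x_k-z_k)$, etc., and set $\tilde G_k(\tilde x,\tilde y):=r_k^{n-4}G_\Omega(z_k+r_k\tilde x,z_k+r_k\tilde y)$, the biharmonic Green function of $\tilde\Omega_k$. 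Since $\partial\Omega\in C^{4,\gamma}$ is compact, $\tilde\Omega_k\to\tilde\Omega_\infty$ in $C^{4,\gamma}_{\mathrm{loc}}$ uniformly in $z_k$, where $\tilde\Omega_\infty$ is a half-space, or all of $\mathbb{R}^n$ when $\min\{d(x_k),d(y_k)\}\gg|x_k-y_k|$. The estimates $|G_\Omega|\le c^*H_\Omega$ and the derivative and near-boundary bounds of \cite{DallacquaSweersJDE,GrunauRobert} are scale invariant, so $\{\tilde G_k\}$ is precompact in $C^4_{\mathrm{loc}}$ off the diagonal with controlled decay; a subsequence converges to the biharmonic Green function $G_{\tilde\Omega_\infty}$ (the decay forcing the correct limit), while simultaneously $r_k^{n-4}H_\Omega\to H_{\tilde\Omega_\infty}$ and $(\tilde x_k,\tilde y_k)\to(\tilde x_\infty,\tilde y_\infty)$ with $|\tilde x_\infty-\tilde y_\infty|\in(0,\infty]$, the limit points possibly on $\partial\tilde\Omega_\infty$ but then approached at a definite quadratic rate. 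Passing to the limit in the failing inequality forces $G_{\tilde\Omega_\infty}(\tilde x_\infty,\tilde y_\infty)\le0$, contradicting Boggio's explicit formula \cite{Boggio}: on the half-space (as a limit of balls) and on the ball the biharmonic Green function is strictly positive and comparable from both sides to $H_{\tilde\Omega_\infty}$, quadratic boundary decay included, while in the $\mathbb{R}^n$-limit the leading term is the positive fundamental solution. One runs this through the genuinely different regimes of $(d(x_k),d(y_k),|x_k-y_k|)$ — both distances $\gtrsim|x_k-y_k|$, all three comparable, the distances $\ll|x_k-y_k|$ or of different orders — each yielding one of the models above, with the quadratic decays of $G_{\tilde\Omega_\infty}$ and $H_{\tilde\Omega_\infty}$ cancelling in the quotient whenever a limit point lands on $\partial\tilde\Omega_\infty$. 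In dimensions $n=2,3$ the model and the scaling must be adapted — which is precisely what the different shape of $H_\Omega$ in \eqref{eq:twosidedestimatefun} reflects — but the scheme is unchanged.

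\emph{Main obstacle.} The crux is that $G_\Omega$ changes sign, so there is no maximum principle for transporting estimates through the blow-up: the identification of the limit with the model Green function — with its correct behaviour at infinity and its quadratic vanishing on $\partial\tilde\Omega_\infty$ — must rest entirely on the scale-invariant quantitative bounds for $|G_\Omega|$, for its derivatives, and for its deviation from the half-space profile, and one has to check that these really survive the rescaling in each regime. The most delicate is the anisotropic case $d(x_k)/d(y_k)\to0$: one must verify that an isotropic rescaling still produces a nondegenerate limiting pair and that the ratio of the two quadratic boundary decays stays bounded away from $0$, so that nothing is wasted in the quotient with $H_{\tilde\Omega_\infty}$. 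Once this is in hand, the positivity and sharp comparability of Boggio's Green functions close the argument, and by compactness of $\partial\Omega$ one pair $(c_1,c_2)$ — with $c_1=0$ admissible exactly for balls and their small perturbations — works on all of $\Omega\times\Omega$.
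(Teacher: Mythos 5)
Your upper bound and macroscopic-regime argument agree with the paper: for $|x-y|$ bounded away from zero the known two-sided estimate $|G_\Omega|\le c^*H_\Omega$ together with $d(x)^2d(y)^2\gtrsim H_\Omega$ lets $c_1 d(x)^2d(y)^2$ absorb the negative part, and this is indeed the only place the correction term does any work. The blow-up-by-contradiction scheme for the microscopic regime is also the right idea, but the unified rescaling you propose has concrete gaps, and the paper avoids them by a different case split.

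First, $r_k:=\max\{d(x_k),d(y_k),|x_k-y_k|\}$ need not tend to $0$; if the points converge to an interior point then $r_k\to d(x_0)>0$ and no blow-up occurs. The paper treats this interior, near-diagonal regime separately: for $n\ge 3$ Proposition~\ref{NehariTheorem1} (Nehari/Grunau--Sweers) already delivers the sharp lower bound when $|x-y|\le\delta_n\max\{d(x),d(y)\}$, and for $n=2$ the continuity of $G_\Omega$ and Nehari's $G_\Omega(x_0,x_0)>0$ are invoked (Lemma~\ref{lem:localestimateinterior}). Second, with your $r_k$ the rescaled pair generically collapses onto the diagonal: when $d(x_k)\sim d(y_k)\gg|x_k-y_k|$ one has $|\tilde x_k-\tilde y_k|=|x_k-y_k|/r_k\to 0$, so your claim $|\tilde x_\infty-\tilde y_\infty|\in(0,\infty]$ fails. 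Passing to the limit in the failing inequality then produces no contradiction, because $C^4_{\mathrm{loc}}$ convergence off the diagonal says nothing at the coincidence point, and there is in fact a singularity there when $n\ge 4$. The paper forecloses this regime before rescaling (it is covered by Proposition~\ref{NehariTheorem1}), which is precisely why it can scale by $|x_k-y_k|$: under \eqref{eq:condxy} one gets $|\xi_k-\eta_k|=1$ and $|\xi_k|\le 1/\delta_n$, so the limit pair is separated and bounded. Third, your alternative limit domain $\mathbb R^n$ cannot arise with the stated set-up, since the blow-up center $z_k$ is taken on $\partial\Omega$ at distance $d(x_k)\le r_k$ from $x_k$, so the rescaled boundary is always at bounded distance and the limit is a half-space. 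Finally, for $n=2$ the convergence to the half-plane Green function is not a routine adaptation: the growth of $G_{\mathcal H}$ at infinity is quadratic-times-logarithmic, and the identification of the limit requires a genuine new uniqueness result (Lemma~\ref{lem:uniqueGreen}, using biharmonic reflection and Liouville-type arguments), together with Krasovski\u{\i}-type derivative estimates to obtain uniform bounds on $G_k$. You wave at these as ``the scheme is unchanged'', but they constitute the bulk of the $n=2$ work. In short: the outer skeleton is right, but the near-diagonal positivity ingredient (Proposition~\ref{NehariTheorem1} resp.\ Nehari's interior positivity) and the $n=2$ uniqueness lemma are essential steps that your proposal is missing.
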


\begin{figure}[hb]
\resizebox{11cm}{!}{\includegraphics{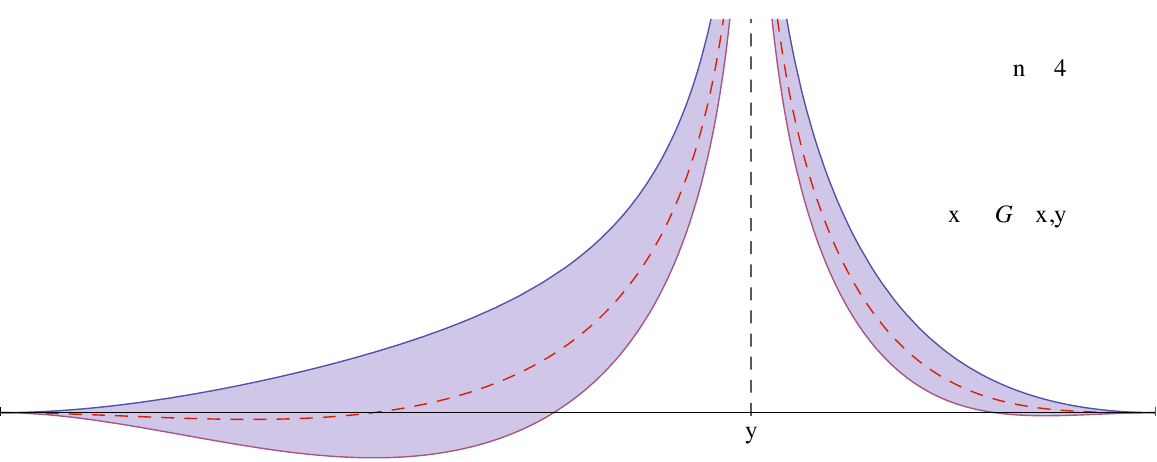}}
\caption{The dashed curve gives the typical behaviour of $G_{\Omega}$; the shaded area describes the band given by $(\ref{eq:twosidedestimate})$.}
\end{figure}

The estimate from above follows from \cite{DallacquaSweersJDE, Krasovskij1,Krasovskij2} so that only
the estimate from below has to be proved here. One should observe
that $G_{\Omega} (x,y)+c_1 d(x)^2d(y)^2\ge 0$ was proved for
some suitable $c_1\ge 0$  in \cite{GrunauRobert}. The preceding
result may be considered as an extension of the estimates in  \cite{GrunauRobert}
showing that close to the pole the positive singular behaviour of the
fundamental solution can also be seen in the Green function.

Due to the different behaviour of the Green function we need to
distinguish between $n\ge 3$ and $n=2$ in proving
Theorem~\ref{thm:optimalestimate}. Finally we should remark that the
lack of a maximum principle not only results in sign changing Green
functions but also complicates the proof of these estimates in the
fourth order case. The proof in the second order case heavily
depends on the Harnack inequality which in turn depends on the
maximum principle.

An interesting consequence of Theorem \ref{thm:optimalestimate} is  a uniform local positivity result.
When $n \ge 3$ this was proved in  \cite{GrunauRobert},
while for  $n=2$ we refer to \cite[Theorem 6.15]{GazzolaGrunauSweers}.
Here the emphasis is on the interplay between Theorem~\ref{thm:optimalestimate}
and the following result. Moreover, we provide a proof for $n=2$ which is
much simpler than the previous one and in the same spirit as for $n\ge 3$.

\begin{theorem}\label{the:unipos}
Let  $\Omega\subset \mathbb{R}^n$ $(n\ge 2)$ be a bounded
$C^{4,\gamma}$-smooth domain. Let $G_{\Omega}$ denote the biharmonic
Green function in $\Omega$ for (\ref{sys}). Then there exist a
constant $r_{\Omega}>0$, such that
\begin{equation}
G_{\Omega}(x,y)>0 \text{ for all } x,y\in\Omega \text{ with } \left|x-y\right|<r_{\Omega}.
\end{equation}
\end{theorem}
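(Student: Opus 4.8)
The plan is to deduce Theorem~\ref{the:unipos} directly from the lower bound in Theorem~\ref{thm:optimalestimate}, by showing that the ``error term'' $c_1 d(x)^2 d(y)^2$ is strictly dominated by $c_2^{-1} H_\Omega(x,y)$ once $x$ and $y$ are close together. First I would fix $x,y\in\Omega$ with $|x-y|$ small and set $\delta := d(x)$, $\delta' := d(y)$. Observe that $|d(x)-d(y)|\le|x-y|$, so when $|x-y|$ is small, $\delta$ and $\delta'$ are comparable: more precisely, if $|x-y| \le \tfrac12 \max(\delta,\delta')$ then $\tfrac12\delta \le \delta' \le 2\delta$ (and symmetrically), while if $|x-y| > \tfrac12\max(\delta,\delta')$ then both $\delta,\delta' \le 2|x-y|$. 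This dichotomy is the only geometric input needed, and it holds in all dimensions.

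Next I would run the comparison $G_\Omega(x,y) \ge c_2^{-1}H_\Omega(x,y) - c_1\delta^2(\delta')^2$ case by case according to the three regimes in \eqref{eq:twosidedestimatefun}, in each case splitting further according to whether $|x-y|$ is ``large'' or ``small'' relative to the boundary distances. For $n>4$: in the regime $d(x)^2d(y)^2 \ge |x-y|^4$ we have $H_\Omega(x,y) = |x-y|^{4-n}$, and since $\delta^2(\delta')^2$ is comparable to $\delta^4$ which is at most a constant times $\delta^{4-n}|x-y|^{n}\cdot(\text{something})$... more cleanly, here $\delta(\delta') \ge |x-y|^2$ so $\delta^2(\delta')^2 \le \delta^3(\delta')^3/|x-y|^2 \le (\operatorname{diam}\Omega)^{?}$; the efficient route is to note $\delta^2(\delta')^2 = |x-y|^{4-n}\cdot\frac{\delta^2(\delta')^2}{|x-y|^{4-n}}$ and bound $\frac{\delta^2(\delta')^2}{|x-y|^{4-n}} \le \frac{\delta^2(\delta')^2}{|x-y|^{4}}\cdot|x-y|^{n} $, which is small for $|x-y|$ small since the first factor is bounded by $(\operatorname{diam}\Omega/|x-y|)^{?}$—no: in this regime $\delta^2(\delta')^2/|x-y|^4 \ge 1$, so that does not work directly, and one must instead use $\delta,\delta'\le \operatorname{diam}\Omega$ together with $\delta^2(\delta')^2 \le \delta^{?}$. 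The clean statement is: since $|x-y| \ge$ implies nothing, but $\delta^2(\delta')^2 = \bigl(\delta^2(\delta')^2\bigr)^{1 - \frac{n-4}{n}} \cdot \bigl(\delta^2(\delta')^2\bigr)^{\frac{n-4}{n}}$ and $\delta^2(\delta')^2 \le |x-y|^{?}$... I will instead use the comparable-distances dichotomy: when $|x-y| > \tfrac12\max(\delta,\delta')$, we get $\delta^2(\delta')^2 \le C|x-y|^{4-n}\cdot|x-y|^{n} = C|x-y|^{4-n}|x-y|^n \le C'|x-y|^{4-n}\cdot|x-y|^4$ is wrong too — the honest bound is $\delta^2(\delta')^2 \le C|x-y|^4 = C|x-y|^{4-n}\cdot |x-y|^n$, so $\delta^2(\delta')^2 / H_\Omega(x,y) \le C|x-y|^n$ in the regime $H_\Omega = |x-y|^{4-n}$ combined with $|x-y| > \tfrac12\max(\delta,\delta')$, and this tends to $0$; whereas when $|x-y|\le\tfrac12\max(\delta,\delta')$ we are in the regime $d(x)^2d(y)^2 \ge c|x-y|^4$ automatically (since $\delta \sim \delta' \ge 2|x-y|$), $H_\Omega = |x-y|^{4-n}$, and $\delta^2(\delta')^2 \sim \delta^4$, so $\delta^2(\delta')^2/H_\Omega \sim \delta^4|x-y|^{n-4}$ which is bounded by $(\operatorname{diam}\Omega)^4 |x-y|^{n-4}\to 0$ when $n>4$, and for $n=4$ the ratio against $H_\Omega = \log(1+\delta^2(\delta')^2/|x-y|^4)$ is $\delta^4/\log(1+\delta^4/|x-y|^4)$, which does \emph{not} obviously go to zero as $|x-y|\to0$ with $\delta$ fixed — so for $n=4$ one keeps $|x-y|$ small relative to $\delta$: there $\delta^4/|x-y|^4\to\infty$, $\log(1+\delta^4/|x-y|^4)\to\infty$, and $\delta^2(\delta')^2 = O(\delta^4)$ is bounded while $H_\Omega\to\infty$, giving ratio $\to 0$. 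The remaining $n=4$ subcase $|x-y|>\tfrac12\max(\delta,\delta')$ gives $\delta^2(\delta')^2 \le C|x-y|^4$ and $H_\Omega = \log(1+\delta^2(\delta')^2/|x-y|^4) \ge c\,\delta^2(\delta')^2/|x-y|^4$ for bounded argument, so the ratio is $\le C|x-y|^4\to0$. The cases $n=2,3$ are handled the same way using the explicit $H_\Omega$ in \eqref{eq:twosidedestimatefun} and the identity $\delta^2(\delta')^2 = \delta^{2-n/2}(\delta')^{2-n/2}\cdot\delta^{n/2}(\delta')^{n/2}$.

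Concretely, the whole argument reduces to the single claim:
\begin{equation*}
\lim_{\;|x-y|\to 0\;}\ \frac{d(x)^2 d(y)^2}{H_\Omega(x,y)} = 0 \quad\text{uniformly in }x,y\in\Omega,
\end{equation*}
and then choosing $r_\Omega>0$ so small that this ratio is $< 1/(2c_1 c_2)$ whenever $|x-y|<r_\Omega$ forces $G_\Omega(x,y) \ge c_2^{-1}H_\Omega(x,y) - c_1 d(x)^2 d(y)^2 \ge \tfrac12 c_2^{-1} H_\Omega(x,y) > 0$ (if $c_1=0$ there is nothing to prove). The verification of this limit is exactly the case analysis sketched above.

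The main obstacle, and the only place requiring care, is establishing the uniform limit in the borderline dimension $n=4$ and in the ``far'' regime $|x-y|\gtrsim\max(d(x),d(y))$, where $H_\Omega$ degenerates to order $d(x)^2d(y)^2/|x-y|^{4}$ times a slowly varying factor; here one must check that the competing power of $|x-y|$ still wins, which it does because $d(x)^2 d(y)^2 \le C|x-y|^{4}$ in that regime forces $H_\Omega$ and $d(x)^2d(y)^2$ to be \emph{comparable} up to a factor $|x-y|^{n-?}$ that vanishes — so in fact in the far regime the ratio is controlled by a positive power of $|x-y|$ directly, uniformly. Once this is in hand, Theorem~\ref{the:unipos} follows with $r_\Omega$ as above; the separate proof for $n=2$ promised in the text is then simply the $n=2$ instance of this uniform comparison, which is why it runs ``in the same spirit as for $n\ge3$.''
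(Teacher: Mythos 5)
Your strategy---deduce positivity from the two-sided estimate in Theorem~\ref{thm:optimalestimate} by showing that the correction term $c_1 d(x)^2 d(y)^2$ is strictly dominated by $c_2^{-1} H_\Omega(x,y)$ once $|x-y|$ is small---does work in dimensions $n\ge 4$, and in that range it is in spirit the argument the paper gives (the paper splits on $d(x)d(y)\le|x-y|^2$ rather than on $|x-y|\le\tfrac12\max\{d(x),d(y)\}$, but the content is the same). However, the central uniform limit you reduce everything to,
\begin{equation*}
\lim_{|x-y|\to 0}\ \frac{d(x)^2\,d(y)^2}{H_\Omega(x,y)}=0 \quad\text{uniformly in }x,y\in\Omega,
\end{equation*}
is \emph{false} in dimensions $n=2,3$. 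Let $x,y$ both converge to a fixed interior point $z_0\in\Omega$, so that $d(x),d(y)$ stay comparable to $d(z_0)>0$ while $|x-y|\to 0$. Then eventually $d(x)d(y)>|x-y|^2$, and \eqref{eq:twosidedestimatefun} gives $H_\Omega(x,y)=d(x)^{2-n/2}d(y)^{2-n/2}$, whence
\begin{equation*}
\frac{d(x)^2 d(y)^2}{H_\Omega(x,y)}=d(x)^{n/2}d(y)^{n/2}\ \approx\ d(z_0)^n,
\end{equation*}
which is bounded away from $0$. The identity $\delta^2(\delta')^2=\delta^{2-n/2}(\delta')^{2-n/2}\cdot\delta^{n/2}(\delta')^{n/2}$ you invoke makes this transparent rather than rescuing the argument: the factor $\delta^{n/2}(\delta')^{n/2}$ does not vanish. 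So for $n=2,3$ the lower bound of Theorem~\ref{thm:optimalestimate} only gives $G_\Omega\ge c_2^{-1}H_\Omega-c_1 d(x)^2d(y)^2$, a difference of two quantities of the same order near interior points, which is inconclusive whenever $c_1c_2>1$. The singularity of $H_\Omega$ at $x=y$ is simply too weak in low dimensions to overwhelm $d(x)^2d(y)^2$, so Theorem~\ref{thm:optimalestimate} alone cannot imply Theorem~\ref{the:unipos} when $n\le 3$.

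This is exactly where the paper brings in an ingredient that is \emph{not} a corollary of Theorem~\ref{thm:optimalestimate}. In Case~II ($d(x)d(y)>|x-y|^2$), after handling $n\ge 4$ by boundedness of $d(x)d(y)$ much as you do, the paper treats $n=3$ by a further split: when $d(x)d(y)$ is small, Theorem~\ref{thm:optimalestimate} still suffices; but when $d(x)$ is bounded below, it appeals to Proposition~\ref{NehariTheorem1} (the Nehari/Grunau--Sweers local positivity estimate), which asserts $G_\Omega(x,y)>0$ directly whenever $|x-y|\le\delta_n\max\{d(x),d(y)\}$, with no reference to $H_\Omega$. For $n=2$ the paper bypasses Theorem~\ref{thm:optimalestimate} entirely and cites Corollary~\ref{lem:2uni}. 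Some such independent interior positivity input is unavoidable for $n=2,3$, and your proposal is missing it.
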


\section{Some auxiliary results for $n\ge 3$}
\label{sect:lemmas}

A careful inspection of the proofs in Nehari \cite{Nehari}
and Grunau-Sweers \cite{GrunauSweers3} shows the following
local estimate for the biharmonic Green function from below.

\begin{proposition}\label{NehariTheorem1}
Let $n\ge 3$. Then there exists constants $\delta_n>0$ and $c_3>0$, which
depend only on the dimension $n$, such that the following holds
true.

Assume $\Omega\subset \mathbb{R}^n$ to be a $C^{4,\gamma}$-smooth
bounded domain and let $G_{\Omega}:=G_{\Delta^2,\Omega}$ denote the
Green function for the biharmonic operator under Dirichlet boundary
conditions. If
\begin{equation}
|x-y| \le \delta_n \max \{ d(x), d(y)\},\label{eq:deltanmax}
\end{equation}
then we have
$$
G_{\Omega} (x,y) >
\left\{ \begin{array}{ll}
\displaystyle   c_3~ |x-y|^{4-n}  & \mbox{\ if\ } n>4,
\\[3mm]
\displaystyle   c_3\,\log\left(1+\frac{1}{|x-y|^4} \right)\hfill &\mbox{\ if\ } n=4,
\\[5mm]
\displaystyle
c_3~  d(x)^{1/2} d(y)^{1/2} & \mbox{\ if\ } n=3.
\end{array}
\right.
$$
For the constant $\delta_n$, one may achieve that $\delta_n \ge 0.5$.
\end{proposition}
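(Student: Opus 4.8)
The plan is to reduce the global, scale-invariant statement of Proposition~\ref{NehariTheorem1} to a compactness/rescaling argument near the pole, combined with the known local estimates of Nehari~\cite{Nehari} and Grunau--Sweers~\cite{GrunauSweers3}. First I would fix a point $x\in\Omega$ with, say, $d(x)\ge d(y)$, and set $R:=d(x)$; by \eqref{eq:deltanmax} we have $|x-y|\le \delta_n R$, so that after the rescaling $z\mapsto x+Rz$ the pole $x$ corresponds to $0$, the point $y$ corresponds to some $\tilde y$ with $|\tilde y|\le \delta_n$, and the rescaled domain $\tilde\Omega:=R^{-1}(\Omega-x)$ contains the unit ball $B_1(0)$. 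By the scaling behaviour of the biharmonic Green function, $G_\Omega(x,y)=R^{4-n}G_{\tilde\Omega}(0,\tilde y)$ for $n\neq 4$ (with the corresponding logarithmic modification for $n=4$), so it suffices to bound $G_{\tilde\Omega}(0,\tilde y)$ from below by a constant depending only on $n$, uniformly over all smooth domains $\tilde\Omega\supset B_1(0)$ and all $|\tilde y|\le\delta_n$.

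The heart of the matter is then a local lower bound that does not see the global geometry of $\tilde\Omega$. Here I would invoke the construction in \cite{Nehari,GrunauSweers3}: split $G_{\tilde\Omega}(0,\cdot)=F_n(|\cdot|)-h$, where $F_n$ is the fundamental solution of $\Delta^2$ (so $F_n(r)=c_n r^{4-n}$ for $n>4$, $F_n(r)=c_4\log(1/r)$ for $n=4$, and $F_n(r)=c_n r^{4-n}$ with $4-n\in\{1,2\}$ for $n=2,3$) and $h$ is biharmonic on $B_1(0)$ with the boundary data forced by the Dirichlet conditions on $\partial\tilde\Omega$. The point is that on the smaller ball, say $B_{1/2}(0)$, one has good control of $h$ and its derivatives: $h$ is biharmonic, and its size on $B_{1/2}(0)$ is controlled by interior estimates in terms of, roughly, the size of $G_{\tilde\Omega}(0,\cdot)$ on $B_1(0)$, which in turn is controlled via the two-sided bounds (or via the known a~priori bounds for biharmonic Green functions) independently of $\tilde\Omega$. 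Thus on $B_{\delta_n}(0)$ with $\delta_n\le 1/2$ the singular term $F_n$ dominates the remainder $h$, and one gets $G_{\tilde\Omega}(0,\tilde y)\ge c_3 F_n(|\tilde y|)$ for $n\ge 4$; for $n=3$ the fundamental solution is bounded near $0$, so the gain must come instead from the positive contribution of the boundary term, which produces the factor $d(x)^{1/2}d(y)^{1/2}$ after undoing the scaling — this matches the statement since in the rescaled picture $d_{\tilde\Omega}(0)=1$ and $d_{\tilde\Omega}(\tilde y)\ge 1-\delta_n$ is bounded below.

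The main obstacle, as I see it, is making the control of the remainder $h$ genuinely uniform in the domain $\tilde\Omega$: a priori $\partial\tilde\Omega$ could be very close to $\partial B_1(0)$, or very irregular at the scale of the original domain, and one must ensure that the interior estimate on $B_{1/2}(0)$ does not degenerate. The clean way around this is to observe that $G_{\tilde\Omega}(0,\cdot)\ge G_{B_1(0)}(0,\cdot)$ is \emph{false} in general (no positivity-preserving), so instead one should use the representation $G_{\tilde\Omega}(0,\cdot)=G_{B_1(0)}(0,\cdot)+w$ where $w$ is biharmonic on $B_1(0)$ and estimate $w$ on $B_{1/2}(0)$ by a barrier/maximum-modulus-type argument using only the a~priori $L^\infty$ or weighted bounds on $G_{\tilde\Omega}(0,\cdot)$ near $\partial B_1(0)$ (for instance, the upper bound in \eqref{eq:twosidedestimatefun} applied on $\tilde\Omega$, or Krasovski\u{\i}'s estimates \cite{Krasovskij1,Krasovskij2}); Boggio's explicit formula gives the needed positivity and size of $G_{B_1(0)}(0,\cdot)$ on $B_{1/2}(0)$, and the biharmonic function $w$ is then controlled on $B_{1/2}(0)$ by its boundary values on $\partial B_{3/4}(0)$, say, via standard elliptic estimates with constants depending only on $n$. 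Once $|w|\le \tfrac12 G_{B_1(0)}(0,\cdot)$ on $B_{\delta_n}(0)$ is secured, choosing $\delta_n\le 1/2$ (which, one checks, is compatible with $\delta_n\ge 0.5$, i.e. $\delta_n=1/2$ works) yields the claimed bounds with an explicit $c_3=c_3(n)$, and rescaling back produces \eqref{eq:deltanmax}$\Rightarrow$ the displayed inequality.
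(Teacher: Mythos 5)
Your overall scheme (rescale by $R=d(x)$ so that $\tilde\Omega\supset B_1(0)$, decompose $G_{\tilde\Omega}(0,\cdot)=G_{B_1(0)}(0,\cdot)+w$ on the ball, control $w$, and read off the lower bound from Boggio's formula) is a reasonable first guess, and it correctly isolates where the difficulty must lie.  However, the step you yourself flag as the main obstacle --- obtaining a bound on $w$ that is \emph{uniform} over the whole family of rescaled domains $\tilde\Omega=d(x)^{-1}(\Omega-x)$ --- is not actually resolved by the fix you propose, and this is the whole content of the proposition.  Concretely:
(1) The Krasovski\u{\i} / Dall'Acqua--Sweers upper bounds you invoke have constants that depend on the domain; applying them on $\tilde\Omega$ gives a constant depending on $x$ and on $\Omega$, not a purely dimensional one.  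The proposition asserts that $\delta_n$ and $c_3$ depend \emph{only} on $n$, which is strictly stronger than what a domain-dependent a priori bound on $G_{\tilde\Omega}$ can yield; you would at best recover a bound depending on the $C^{4,\gamma}$ data of $\Omega$, and even that requires an argument (the geometry of $\tilde\Omega$ away from the rescaled neighbourhood of $x$ is unconstrained).
(2) The phrase ``barrier/maximum-modulus-type argument'' cannot be taken at face value: there is no maximum principle for $\Delta^2$, which is emphasized throughout the paper.  What one can do is write $w$ via the explicit biharmonic Poisson kernels of the ball, but that requires uniform control of $G_{\tilde\Omega}(0,\cdot)$ and $\nabla G_{\tilde\Omega}(0,\cdot)$ on a sphere inside $\tilde\Omega$, bringing you back to point (1).
(3) For $n=3$ your heuristic that ``the fundamental solution is bounded near $0$, so the gain must come from the boundary term'' is misleading in sign: $F_3(r)=-r/(8\pi)$ is nonpositive, so the lower bound does not come from $F_n$ at all but entirely from the positivity of $G_{B_1}(0,\cdot)$ in Boggio's formula.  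The final formula $d(x)^{1/2}d(y)^{1/2}$ happens to be compatible with your rescaling, but the mechanism you describe is not the right one.

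By contrast, the paper does not attempt a self-contained proof here: it states that the proposition follows from a careful inspection of Nehari \cite{Nehari} and Grunau--Sweers \cite{GrunauSweers3}, whose arguments produce the domain-independent $\delta_n,c_3$ by structural comparison arguments (for $n\le3$ exploiting the variational monotonicity of $G_\Omega(x,x)$ in the domain, and in general the explicit biharmonic Poisson representation in balls with sign information on the kernels), not by rescaling plus a priori Krasovski\u{\i} bounds.  So your proposal is a genuinely different route, and it has a real gap at its heart: the uniformity of the remainder estimate, which is precisely what Nehari and Grunau--Sweers supply and what your argument would need to reprove.
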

 In dimension $n=2$ it seems impossible to achieve a global
linear dependence of  the radius of a ball of guaranteed positivity
on the boundary distance, see Lemma \ref{lem:localestimateinterior}
and \cite{Nehari}. In other words, the best result seems $\delta_2 = \delta_2(\Omega)$, which strictly depends on $\Omega$.

Due to Proposition \ref{NehariTheorem1} and using the same constants $\delta_n>0$ as there
we may restrict ourselves in what follows to $x,y$ such that
\begin{equation}\label{eq:condxy}
x,y\in \Omega,x\not=y, |x-y| > \delta_n \max \{ d(x), d(y)\}.
\end{equation}

\begin{lemma}\label{lemma:basicestimate1}
Suppose that $n\ge 3$ and that $\Omega\subset\mathbb{R}^n$ is a bounded
$C^{4,\gamma}$-smooth domain. Then for each $x_0\in\overline{\Omega}$
there exists a radius $r=r_{x_0}>0$ and a constant $C=C_{x_0}>0$
such that for all $x,y\in \Omega_{x_0,r}:=\overline{\Omega}\cap B_r(x_0)$
subject to condition (\ref{eq:condxy}) one has
\begin{equation}\label{eq:boundarybehaviour}
G_{\Omega} (x,y) \ge C |x-y|^{-n} d(x)^2 d(y)^2.
\end{equation}
\end{lemma}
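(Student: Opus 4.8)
The plan is to reduce the estimate near an arbitrary point $x_0\in\overline{\Omega}$ to a perturbation of a model situation where the Green function is explicitly known to be positive and comparable to the right-hand side. First I would distinguish two cases according to the location of $x_0$. If $x_0\in\Omega$ is an interior point, then for $r=r_{x_0}$ small enough $B_r(x_0)\subset\subset\Omega$, so $d(x),d(y)\ge d(x_0)/2 \ge c|x-y|$ on $\Omega_{x_0,r}$ (using $|x-y|<2r$), and by interior regularity of the Green function together with the lower bound on the fundamental solution one gets $G_\Omega(x,y)\ge c|x-y|^{4-n}\gtrsim |x-y|^{-n}d(x)^2d(y)^2$ directly; here condition~(\ref{eq:condxy}) is actually vacuous or harmless since $d(x),d(y)$ are bounded below. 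The substantive case is $x_0\in\partial\Omega$.

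For $x_0\in\partial\Omega$ the idea is to use a local biholomorphic/bi-Lipschitz (in fact $C^{4,\gamma}$) change of variables $\Psi$ flattening $\partial\Omega$ near $x_0$, mapping $\Omega_{x_0,r}$ into a neighbourhood of the origin in a half-space. Under such a transformation the biharmonic operator becomes a fourth order elliptic operator with leading part $\Delta^2$ up to lower order terms with small coefficients (small because $\Psi$ is close to an isometry on a small ball, using that $\Omega$ is $C^{4,\gamma}$). One then compares $G_\Omega$ with the biharmonic Green function $G_{\mathbb{R}^n_+}$ of the half-space, or rather of a large ball tangent to the boundary — for the ball Boggio's formula (cf.\ \cite{Boggio}) gives an explicit positive Green function, and a direct computation of Boggio's kernel shows $G_{B}(x,y)\gtrsim |x-y|^{-n}d_B(x)^2 d_B(y)^2$ precisely in the regime~(\ref{eq:condxy}), i.e.\ when $|x-y|$ is large compared to the boundary distances. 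The key point is that this lower bound for the model is stable under the small perturbation coming from the change of variables and from the difference between the curved boundary and the osculating ball.

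The main obstacle I anticipate is controlling this perturbation: one must show that after freezing the leading operator to $\Delta^2$, the error terms — both the lower order differential terms produced by $\Psi$ and the mismatch between $G_\Omega$ and $G_B$ on the boundary of the local patch — do not destroy the positive lower bound. A clean way to organize this is via a Schauder-type a priori estimate plus a maximum-principle-free perturbation argument on the small ball $B_r(x_0)$: write $G_\Omega = G_B + R$ where $R$ solves a biharmonic-type problem whose data is controlled by the boundary values of $G_B$ (which are zero to second order because of the Dirichlet conditions) plus the operator error, so that $|R(x,y)| \le \varepsilon(r)\,|x-y|^{-n}d(x)^2d(y)^2$ with $\varepsilon(r)\to 0$ as $r\to 0$; choosing $r_{x_0}$ small then yields~(\ref{eq:boundarybehaviour}) with $C=\tfrac12 c_3'$ where $c_3'$ is the model constant. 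One should also double-check the scaling: all four relevant quantities $G$, $|x-y|^{-n}$, $d(x)^2$, $d(y)^2$ have matching homogeneity, so the estimate is scale-invariant and it suffices to prove it at a fixed scale, which makes the smallness of $\varepsilon(r)$ the only thing that needs care.

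Finally, a remark on why~(\ref{eq:condxy}) is exactly the right hypothesis: it guarantees we are in the \emph{boundary} regime of $H_\Omega$ where $H_\Omega(x,y)\asymp |x-y|^{-n}d(x)^2d(y)^2$, and it is precisely in this regime that Boggio's kernel for the ball attains that behaviour with a positive constant; in the complementary regime $|x-y|\le\delta_n\max\{d(x),d(y)\}$ positivity is already supplied by Proposition~\ref{NehariTheorem1}, so combining the two ranges will eventually give the two-sided estimate of Theorem~\ref{thm:optimalestimate}. The proof of this lemma only needs the lower bound, so I would keep the argument to: (i) interior case by direct regularity; (ii) boundary case by flattening, comparison with Boggio's Green function on an osculating ball, and a smallness-of-perturbation estimate.
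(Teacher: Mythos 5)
Your treatment of the interior case is fine (the paper shortcuts it even further by noting that for small $r$ condition~(\ref{eq:condxy}) can no longer hold, making the conclusion vacuous there). For boundary points, however, your approach differs from the paper's and has a genuine gap. You propose to write $G_\Omega = G_B + R$ after flattening, with $G_B$ Boggio's Green function on an osculating ball, and to establish $|R(x,y)|\le\varepsilon(r)\,|x-y|^{-n}d(x)^2d(y)^2$ with $\varepsilon(r)\to0$. That estimate is exactly the crux, and you present it as the output of ``a Schauder-type a priori estimate plus a maximum-principle-free perturbation argument'' without indicating how it would actually be obtained. It is a degenerate, anisotropic bound: it requires $R$ to vanish to second order at $\partial\Omega$ in \emph{each} variable, uniformly down to the singularity $|x-y|\to0$, with a constant that is not merely bounded but \emph{small}. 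Krasovski\u{\i}/Schauder estimates control the size and decay of a single Green function, but they do not by themselves yield such a comparison between two Green functions with a small constant; and since $\Delta^2$ has no maximum principle, the barrier/comparison argument that would make this routine in the second-order case is unavailable. Establishing your perturbation bound is, in effect, as hard as the lemma itself, and indeed the absence of a workable direct perturbation argument is what motivated the paper's different route.

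The paper argues instead by contradiction and compactness: assuming a sequence $(x_k,y_k)\to x_0$ violating~(\ref{eq:boundarybehaviour}), it rescales $G_k(\xi,\eta):=|x_k-y_k|^{n-4}G_\Omega(\tilde x_k+|x_k-y_k|\xi,\tilde x_k+|x_k-y_k|\eta)$, so that $|\xi_k-\eta_k|=1$ and, thanks to~(\ref{eq:condxy}), the rescaled points stay in a bounded region of $\overline{\mathcal H}$. It then invokes the locally uniform convergence $G_k\to G_{\mathcal H}$ away from the diagonal, proved in \cite[Lemma 7]{GrunauRobert}, together with the strict positivity of Boggio's half-space Green function, to derive a contradiction. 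This trades the quantitative perturbation estimate you would need for a soft, already-established convergence statement, which is precisely what makes the lemma tractable. If you wish to pursue the direct perturbation route, the $\varepsilon(r)$-bound on $R$ is what you must actually prove, and as it stands that step is missing.
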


\begin{proof}We only need to discuss $x_0\in\partial\Omega$ since for interior
points $x_0$ one may choose $r=r_{x_0}>0$ so small that condition (\ref{eq:condxy})
becomes void.

We assume by contradiction that there exist
$x_k,y_k\in \Omega_{x_0,1/k}=\overline{\Omega}\cap B_{1/k}(x_0)$ subject to (\ref{eq:condxy})
such that
\begin{equation}\label{eq:contradiction1}
G_{\Omega} (x_k,y_k) < \frac{1}{k} |x_k-y_k|^{-n} d(x_k)^2 d(y_k)^2.
\end{equation}
In particular we have $x_k\to x_0, y_k\to x_0$, $d(x_k),d(y_k)\to 0$, $|x_k-y_k| \to 0$.
Without loss of generality we may assume that $x_0=0$ and that the first unit vector
$\vec{e}_1$ is the exterior unit normal to $\partial\Omega$ at $x_0$.

\begin{figure}[ht]
\hfill\begin{picture}(50,50)(0,0)
\put(0,0){\resizebox{!}{5cm}{\includegraphics{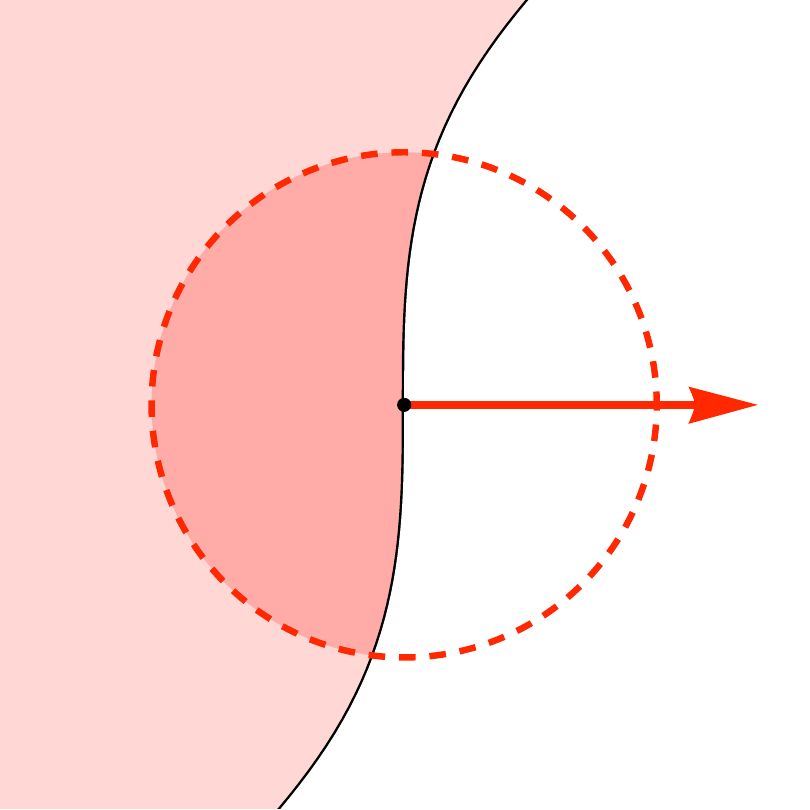}}}
\put(3,45){$\Omega$}
\put(43,21){$\vec{e}_1$}
\put(26,26.5){$x_0$}
\put(12,20){$\Omega_{x_0,1/k}$}
\end{picture}\hfill\mbox{}

\caption{$\Omega$ and subdomain $\Omega_{x_0,1/k}$ for $x_0\in\partial\Omega$.}
\end{figure}

For $k$ large enough, we may define $\tilde{x}_k\in \partial \Omega$ as the closest
boundary point to $x_k$. We introduce the rescaled  biharmonic Green functions
$$
G_k (\xi,\eta) :=|x_k-y_k|^{n-4} G_{\Omega}(\tilde{x}_k+|x_k-y_k|\xi,\tilde{x}_k+|x_k-y_k|\eta)
$$
for
$$
\xi,\eta\in \Omega_k:=\frac{1}{|x_k-y_k|}\left( -\tilde{x}_k+\Omega\right) .
$$
Since $\tilde{x}_k\to 0$, the exterior unit normal at $\partial \Omega$
converges to the first unit vector  and so we conclude that
$$
\Omega_k\to{\mathcal H}:=\{x: x_1<0\} \mbox{\ locally uniformly for \ } k\to \infty.
$$
It was proved in  \cite[Lemma 7]{GrunauRobert}
that locally uniformly in ${\mathcal H}\times{\mathcal H}\setminus\left\{(\xi,\xi);\xi\in\mathcal H\right\}$
$$
G_k(\xi,\eta)\to G_{{\mathcal H}}(\xi,\eta)=
 \frac{1}{4ne_n} |\xi-\eta|^{4-n}
 \int_1^{|\xi^*-\eta|/|\xi-\eta|}
(v^2 -1) v^{1-n}\, dv,
$$
where $\xi^*=(-\xi_1,\xi_2,\ldots,\xi_n)$ and $e_n$ is the $n-$dimensional volume of $B_1(0)\subset\mathbb{R}^n$. We remark that this step
and in particular the required uniqueness proof for $G_{{\mathcal H}}$
were carried out in Grunau-Robert \cite{GrunauRobert} using the assumption $n\ge 3$.
The necessary modifications for $n=2$ are emphasized in the proofs of Lemmata \ref{lem:localestimateinterior} and \ref{lem:localestimateboundary}. Assumption (\ref{eq:contradiction1}) gives
\begin{equation}\label{eq:badassumption}
G_k(\xi_k,\eta_k)=|x_k-y_k|^{n-4}G_{\Omega}(x_k,y_k) <\frac{1}{k} |x_k-y_k|^{-4} d(x_k)^2 d(y_k)^2,
\end{equation}
where
$$
\xi_k =\frac{1}{|x_k-y_k|} (x_k-\tilde{x}_k) ,\quad
\eta_k=\frac{1}{|x_k-y_k|} (y_k-\tilde{x}_k),
$$
$$
|\xi_k|= \frac{d(x_k)}{|x_k-y_k|}\le \frac{1}{\delta_n}, \quad
| \xi_k-\eta_k |=1.
$$
After passing to a further subsequence we find $\xi,\eta\in\overline{\mathcal H}$
with $\xi=\lim_{k\to\infty}\xi_k $, $\eta=\lim_{k\to\infty}\eta_k$.
In view of the local smooth convergence of $G_k$  to the biharmonic
Green function $G_{{\mathcal H}}$ in the half space $\mathcal H$,
Boggio's formula and $|\xi-\eta|=1$
there exists a positive constant $\sigma>0$ such that for $k$ large enough:
\begin{gather*}
G_k(\xi_k,\eta_k)\ge \sigma d(\xi_k)^2 d(\eta_k)^2 = \sigma\left(\frac{d(x_k)}{|x_k-y_k|} \right) ^2
\left( \frac{d(y_k)}{|x_k-y_k|} \right) ^2\\
=\sigma |x_k-y_k|^{-4} d(x_k)^2 d(y_k)^2.
\end{gather*}
This contradicts (\ref{eq:badassumption}) and the proof of the lemma is complete.
\end{proof}

\section{Proof of the main estimate for $n\ge 3$}
\label{sect:mainproof}

Supposing that (\ref{eq:deltanmax}) holds, i.e. $|x-y| \le \delta_n \max \{ d(x), d(y)\}$, one finds that
even $(1-\delta_n)|x-y| \le \delta_n \min \{ d(x), d(y)\}$ and hence
$$
\frac{d(x) d(y)}{|x-y|^2}\ge \frac{1-\delta_n}{\delta_n^2}.
$$
So in that case the estimate from below in Theorem~\ref{thm:optimalestimate}
follows directly from Proposition~\ref{NehariTheorem1}.
Hence, we may assume from now on, again, that
$x,y$ are subject to condition (\ref{eq:condxy}). Applying a compactness argument
to
$$
\overline{\Omega}=\bigcup_{x_0\in \overline{\Omega}} \Omega_{x_0,r_{x_0}/2}
$$
we see that there exist positive numbers $r>0$, $c_4>0$, such that
$|x-y|\le r$ implies that $G_{\Omega} (x,y) \ge c_4 |x-y|^{-n} d(x)^2 d(y)^2$.
If $|x-y|\ge r$  we take from \cite{DallacquaSweersJDE}, cf. also \cite{GazzolaGrunauSweers},
that
$
G_{\Omega} (x,y) \ge -c_5 |x-y|^{-n} d(x)^2 d(y)^2
$
so that
$$
G_{\Omega} (x,y) +2 c_5 |x-y|^{-n} d(x)^2 d(y)^2 \ge c_5 |x-y|^{-n} d(x)^2 d(y)^2.
$$
Since $|x-y|^{-n}\le r^{-n}$ we end up with
$$
G_{\Omega} (x,y) + c_6  d(x)^2 d(y)^2 \ge c_5 |x-y|^{-n} d(x)^2 d(y)^2
$$
and  positive constants $c_5,c_6>0$ in this case. The proof of Theorem~\ref{thm:optimalestimate} for $n\ge 3$ is complete. \hfill $\square$

\section{Auxiliary results for $n=2$}
\label{sect:lemmas_n=2}

\begin{lemma}\label{lem:uniqueGreen}
Let ${\mathcal H}=\{x\in\mathbb{R}^2: x_1<0\}$ and let $\tilde{G}\in C^4(\overline{\mathcal H}
\times\overline{\mathcal H} \setminus \{(x,x):\, x\in \overline{{\mathcal H}}\})$ be a
biharmonic Green function with Dirichlet boundary condition, that is
$$\int_{{\mathcal H}} \tilde{G}(x,\ldotp)\Delta^2\varphi \, dy=\varphi(x) +
\int_{\partial {\mathcal H}}\left(\Delta \tilde{G}(x,\ldotp)
\partial_\nu\varphi-\varphi\partial_\nu\Delta \tilde{G}(x,\ldotp)\right)\, d\sigma$$
for all $\varphi\in C^4_c(\overline{\mathcal H})$ and $x\in\Omega$. Moreover, we assume that $\tilde{G}(x,y)=\tilde{G}(y,x)$
for all $x\neq y$ and that a growth condition holds at infinity:
\begin{equation}
|\tilde{G}(x,y)|\le C\left( 1+|x|^2+|y|^2\right) \left( 1+\left( \log |x|  \right)_+ +\left( \log |y|\right)_+\right).\label{inftybound}
\end{equation}
Then $\tilde{G}$ is uniquely determined and given by Boggio's formula \cite{Boggio}:
$$
\tilde{G}(x,y)=G_{\mathcal H}(x,y)=\frac{1}{8\pi}\left|x-y\right|^2\int_1^{|x^*-y|/|x-y|} \dfrac{v^2-1}{v}\, dv.
$$
with $x^*=(-x_1,x_2)$.
\end{lemma}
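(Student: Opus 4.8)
The strategy is a Liouville argument for the difference. Fix the pole, which I also denote $x\in\mathcal H$, and set $h:=\tilde{G}(x,\ldotp)-G_{\mathcal H}(x,\ldotp)$. Since Boggio's function $G_{\mathcal H}$ is itself a biharmonic Green function satisfying the same integral identity, subtracting the two identities gives $\Delta^2 h=0$ in $\mathcal D'(\mathcal H)$: the two logarithmic fundamental-solution singularities at $x$ coincide and cancel, so that $h$ is in fact continuous across $x$ and, by hypoellipticity of $\Delta^2$, $h\in C^\infty(\mathcal H)\cap C^4(\overline{\mathcal H})$ and biharmonic in $\mathcal H$. Testing the integral identity against $\varphi\in C^4_c(\overline{\mathcal H})$ supported away from $x$ and integrating by parts shows moreover that the Dirichlet data of $h$ vanish, $h=\partial_1 h=0$ on $\partial\mathcal H=\{x_1=0\}$ (the same being visible for $G_{\mathcal H}$ directly from Boggio's formula). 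Finally, a direct estimate of Boggio's formula gives $|G_{\mathcal H}(x,y)|\le C(1+|x|^2+|y|^2)$, so that together with (\ref{inftybound}) one obtains, for the fixed pole, the growth bound $|h(y)|\le C(1+|y|^2)\log(2+|y|)$ on $\overline{\mathcal H}$.

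The key tool is then Duffin's reflection principle for biharmonic functions across a hyperplane: because $h=\partial_1 h=0$ on $\{x_1=0\}$, $h$ extends to an entire biharmonic function $H$ on $\mathbb R^2$, given for $x_1>0$ by a polynomial in $x_1$ of degree two, $H(x_1,x_2)=-h(-x_1,x_2)-2x_1\,\partial_1 h(-x_1,x_2)-x_1^2\,\Delta h(-x_1,x_2)$ (with the constants as in Duffin's formula). Interior and boundary a priori estimates for $\Delta^2$ applied on balls of radius comparable to $|y|$ upgrade the growth bound on $h$ to $|\nabla h(y)|\le C(1+|y|)\log(2+|y|)$ and $|\Delta h(y)|\le C\log(2+|y|)$; substituting these into the reflection formula yields $|H(x)|\le C(1+|x|^2)\log(2+|x|)$ on all of $\mathbb R^2$.

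Now a routine Liouville argument via iterated Cauchy estimates for harmonic functions applies. The function $\Delta H$ is entire harmonic with at most logarithmic growth, hence constant; subtracting a suitable multiple of $|x|^2$ leaves an entire harmonic function with growth $O(|x|^2\log|x|)$, whose third derivatives tend to zero at infinity and therefore vanish identically, so $H$ is a polynomial of degree at most two. Imposing $H=\partial_1 H=0$ on $\{x_1=0\}$ forces $H(y)=a\,y_1^2$ for some constant $a$. Restoring the dependence on the pole $p$, this reads $\tilde{G}(p,y)=G_{\mathcal H}(p,y)+a(p)\,d(y)^2$, where $d(y)=|y_1|$.

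It remains to show $a\equiv 0$, and this is where the symmetry hypothesis is used. By symmetry of both $\tilde{G}$ and $G_{\mathcal H}$ one has $a(p)\,d(y)^2=a(y)\,d(p)^2$ for all $p,y\in\mathcal H$, hence $a(p)=b\,d(p)^2$ for a single constant $b$ and $\tilde{G}(p,y)=G_{\mathcal H}(p,y)+b\,d(p)^2d(y)^2$. Evaluating (\ref{inftybound}) along $p=(-R,0)$, $y=(-R,R)$ with $R\to\infty$, along which $d(p)^2d(y)^2=R^4$ while $|G_{\mathcal H}(p,y)|=O(R^2)$, forces $|b|R^4\le CR^2\log R$, hence $b=0$; therefore $h\equiv 0$ and $\tilde{G}\equiv G_{\mathcal H}$. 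I expect the genuine work to be concentrated in the second paragraph: casting Duffin's biharmonic reflection into the form needed here and, above all, propagating the growth bound through it, which rests on the interior and boundary a priori estimates for the biharmonic Dirichlet problem. The cancellation of singularities and the regularity assertions are routine, and the closing Liouville-and-symmetry step is elementary once the growth of $H$ is established.
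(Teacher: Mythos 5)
Your argument is correct and follows essentially the same route as the paper: subtract Boggio's formula to reduce to a biharmonic remainder $H$ with zero Dirichlet data and controlled growth, extend it to $\mathbb R^2$ by Duffin's reflection, use scaled elliptic estimates to propagate the growth bound to derivatives, conclude by a Liouville argument that $H(x,y)=b(x)\,y_1^2$, use the symmetry $H(x,y)=H(y,x)$ to get $H(x,y)=b\,x_1^2y_1^2$, and kill $b$ by the quadratic growth hypothesis. The only cosmetic difference is the organization of the Liouville step (you first show $\Delta H$ is constant and then that third derivatives of the quadratic correction decay, whereas the paper applies the maximum principle to $\nabla\Delta H^*$ and then argues on $\nabla^3 H^*$), but these are the same argument in slightly different order.
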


\begin{proof} We choose some arbitrary $x\in {\mathcal H}$ and keep it
fixed in what follows. We write
$$
\tilde{G}(x,y)=G_{\mathcal H}(x,y)+H(x,y),
$$
where $H$ is a regular function in $\overline{{\mathcal H}} \times \overline{{\mathcal H}}$
and $\Delta^2_y H(x,\ldotp )\equiv 0$ in ${\mathcal H}$, $H(x,y )=\partial_{y_1} H(x,y )=0$
for $y_1=0$. According to \cite{Duffin,Huber}, or by checking directly, with $y^*=(-y_1,y_2)$,
$$
H^*(x,y):=\left\{ \begin{array}{l}
H(x,y) \hfill \mbox{\ if\ } y_1\le 0,\\[2mm]
\hspace{-2mm}-H(x,y^*)-2y_1\left(\dfrac{\partial H}{\partial y_1} \right) (x,y^*)-
         y_1^2\left(\Delta_y H\right)(x,y^*) \mbox{\ if\ } y_1 >  0,\vspace{-2mm}
\end{array} \right.\vspace{2mm}
$$
satisfies $H^*(x,\ldotp )\in C^4\left( \mathbb{R}^2\right)$ and is biharmonic on $\mathbb{R}^2$. Since $G_{\mathcal H}(x,y)$ satisfies (\ref{inftybound}), so does $H(x,y)$. So we have
$$
| H(x,y) | \le C_x \left( 1+|y|^2\left( \log |y|\right)_+ \right) .
$$
Using local elliptic estimates and their scaling behaviour for biharmonic functions satisfying Dirichlet boundary conditions on $\partial\mathcal H$, that is,
$$
\left\|D^\alpha H(x,\ldotp)\right\|_{L^{\infty}(B_R\cap \mathcal H)}\le\frac{C}{R^{|\alpha|}}\left\|H(x,\ldotp)\right\|_{L^{\infty}(B_{2R}\cap \mathcal H)}
$$
we find that
$$
| \nabla_y H(x,y) | \le C_x \left( 1+|y|\left( \log |y|\right)_+ \right),\qquad
| \nabla^2_y H(x,y) | \le C_x \left( 1+\left( \log |y|\right)_+ \right).
$$
Having these estimates for $H(x,y)$ and these first two derivatives, we have an estimate for $H^*(x,y)$ and may repeat the above arguments to find similar estimates for the derivatives of $H^*(x,y)$ and even
$$
| \nabla^3_y H^*(x,y) | \le C_x \frac{\left( 1+\left( \log |y|\right)_+ \right)}{1+|y|} .
$$
The maximum principle applied to the harmonic function $\nabla_y\Delta_y H^*(x,\ldotp )$ shows that
$$
\| \nabla_y\Delta_y H^*(x,\ldotp ) \|_{C^0(\overline{B_R(0)})}
\le C \frac{\left( 1+\left| \log |R|\right| \right)}{1+|R|}.
$$
Letting $R\to \infty$ yields
$$
\nabla_y\Delta_y H^*(x,\ldotp)\equiv 0, \qquad \Delta_y H^*(x,\ldotp )=a(x)
$$
with a suitable function $a(\ldotp)$.
This shows that any $\nabla^3_y H^*(x,\ldotp)$ is harmonic and, as shown above,
$\nabla^3_y  H^*(x,y )\to 0 $ as $y\to\infty$. Hence, any $\nabla^3_y H^*(x,\ldotp )\equiv 0$
and by Taylor's formula and observing the boundary data we conclude that
$$
H^*(x,y)=b(x)y_1^2
$$
with a suitable function $b(\ldotp )$.
By symmetry $H(x,y)=H(y,x)$ and so $H(x,y)=b(y)x_1^2=b(x)y_1^2=bx_1^2y_1^2,$
where $b\in\mathbb{R}$ is a suitable constant.
Finally, the growth condition leads to $b=0$, $H(x,\ldotp )\equiv0$, and
$\tilde{G}=G_{\mathcal H}$.
\end{proof}

\begin{lemma}[Estimates near the boundary]\label{lem:localestimateboundary}
Suppose that $n=2$ and that $\Omega\subset\mathbb{R}^2$ is a bounded
$C^{4,\gamma}$-smooth domain. Then for each $x_0\in\partial{\Omega}$
there exists a radius $r=r_{x_0}>0$ and a constant $C=C_{x_0}>0$
such that for all $x,y\in \Omega_{x_0,r}:=\overline{\Omega}\cap B_r(x_0)$
 one has
\begin{equation}\label{eq:boundarybehaviour_n=2}
G_{\Omega} (x,y) \ge C d(x) d(y)\min\left\{ 1,\frac{ d(x) d(y)}{|x-y|^{2}} \right\}.
\end{equation}
\end{lemma}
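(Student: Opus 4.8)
We argue by contradiction and blow-up, in the spirit of the proof of Lemma~\ref{lemma:basicestimate1}, with the modifications forced by $n=2$: the logarithm in the fundamental solution, the lack of a constant $\delta_2$ independent of $\Omega$ in~(\ref{eq:condxy}) (so small $|x-y|$ relative to the boundary distances must also be handled here), and the uniqueness of the half-space Green function, for which Lemma~\ref{lem:uniqueGreen} replaces the argument of \cite{GrunauRobert}. A key standing input is the a priori bound $|G_\Omega(x,y)|\le C\,d(x)d(y)$ on $\Omega$, which follows from the upper estimate of Theorem~\ref{thm:optimalestimate} (already established), since $H_\Omega\le d(x)d(y)$ when $n=2$, together with $G_\Omega\ge -c_1 d(x)^2 d(y)^2$ from \cite{GrunauRobert}. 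Suppose the assertion fails at some $x_0\in\partial\Omega$; then there are $x_k,y_k\in\Omega_{x_0,1/k}$ with $G_\Omega(x_k,y_k)<\tfrac1k\,d(x_k)d(y_k)\min\{1,d(x_k)d(y_k)|x_k-y_k|^{-2}\}$, so in particular $x_k,y_k\to x_0$ and $d(x_k),d(y_k),|x_k-y_k|\to 0$. Normalise $x_0=0$ with exterior unit normal $\vec e_1$ and let $\tilde x_k\in\partial\Omega$ be the point closest to $x_k$. Passing to a subsequence, either $d(x_k)/|x_k-y_k|$ stays bounded (Case~A) or tends to $\infty$ (Case~B, in which then also $d(y_k)/|x_k-y_k|\to\infty$ and $d(y_k)/d(x_k)\to1$).

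\emph{Case~A.} Rescale by $\mu_k:=|x_k-y_k|$: set $\Omega_k:=\mu_k^{-1}(\Omega-\tilde x_k)\to\mathcal H=\{\zeta_1<0\}$ in $C^{4,\gamma}_{\mathrm{loc}}$, $G_k(\zeta,\theta):=\mu_k^{-2}G_\Omega(\tilde x_k+\mu_k\zeta,\tilde x_k+\mu_k\theta)$, and $\xi_k:=\mu_k^{-1}(x_k-\tilde x_k)$, $\eta_k:=\mu_k^{-1}(y_k-\tilde x_k)$, so $|\xi_k-\eta_k|=1$ while (as $\tilde x_k$ realises $d(x_k)$) $d_{\Omega_k}(\xi_k)=|\xi_k|$ and $d_{\Omega_k}(\eta_k)=\mu_k^{-1}d(y_k)$ stay bounded. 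The a priori bound gives $|G_k(\zeta,\theta)|\le C\,d_{\Omega_k}(\zeta)d_{\Omega_k}(\theta)\le C|\zeta||\theta|$ on compacta. The step genuinely new for $n=2$ is the identification of the limit: the singular part of $G_k$ equals $\tfrac1{8\pi}|\zeta-\theta|^2\log|\zeta-\theta|+\tfrac{\log\mu_k}{8\pi}|\zeta-\theta|^2$, so $G_k(\zeta,\theta)-\tfrac1{8\pi}|\zeta-\theta|^2\log|\zeta-\theta|$ is biharmonic in $\theta$ (it differs from the rescaled regular part of $G_\Omega$ by the biharmonic polynomial $\tfrac{\log\mu_k}{8\pi}|\zeta-\theta|^2$) and, by $|G_\Omega|\le Cd(x)d(y)$, uniformly bounded on compacta; interior estimates make it precompact in $C^\infty$ on compacta of $\mathcal H\times\mathcal H$, while boundary Schauder estimates together with $\Omega_k\to\mathcal H$ make $G_k$ precompact in $C^4$ up to $\partial\mathcal H$ off the diagonal. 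So, along a subsequence, $G_k\to\bar G$ locally uniformly on $\overline{\mathcal H}\times\overline{\mathcal H}$, where $\bar G$ is a symmetric biharmonic Green function with Dirichlet data, correct singularity and growth~(\ref{inftybound}); by Lemma~\ref{lem:uniqueGreen}, $\bar G=G_{\mathcal H}$, and the whole sequence converges. Extracting $\xi_k\to\xi$, $\eta_k\to\eta\in\overline{\mathcal H}$ with $|\xi-\eta|=1$, the rescaled hypothesis becomes $G_k(\xi_k,\eta_k)<\tfrac1k d_{\Omega_k}(\xi_k)d_{\Omega_k}(\eta_k)\min\{1,d_{\Omega_k}(\xi_k)d_{\Omega_k}(\eta_k)\}=O(1/k)$. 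If $d_{\mathcal H}(\xi),d_{\mathcal H}(\eta)>0$, Boggio's formula \cite{Boggio} yields $G_{\mathcal H}(\xi,\eta)>0$, contradicting $G_k(\xi_k,\eta_k)\to G_{\mathcal H}(\xi,\eta)$. If one of $d_{\mathcal H}(\xi),d_{\mathcal H}(\eta)$ vanishes, I would argue — as in the proof of Lemma~\ref{lemma:basicestimate1} — that by Boggio's formula $G_{\mathcal H}(\zeta,\theta)/(d_{\mathcal H}(\zeta)^2 d_{\mathcal H}(\theta)^2)$ is continuous and strictly positive on $\overline{\mathcal H}\times\overline{\mathcal H}$ off the diagonal (with value $\tfrac1{2\pi}$ on $|\zeta-\theta|=1$), and that the $C^4$-convergence of $G_k$ and of $\partial\Omega_k$ up to the boundary promotes this to $G_k(\xi_k,\eta_k)\ge c_0\,d_{\Omega_k}(\xi_k)^2 d_{\Omega_k}(\eta_k)^2$ for large $k$, with $c_0>0$ fixed; since $d_{\Omega_k}(\xi_k)d_{\Omega_k}(\eta_k)$ is bounded, this dominates $c\,d_{\Omega_k}(\xi_k)d_{\Omega_k}(\eta_k)\min\{1,d_{\Omega_k}(\xi_k)d_{\Omega_k}(\eta_k)\}$ and contradicts the hypothesis once $k>1/c$.

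\emph{Case~B.} Here I would rescale instead by $s_k:=d(x_k)$: set $\check\Omega_k:=s_k^{-1}(\Omega-\tilde x_k)\to\mathcal H$, $\check G_k(\zeta,\theta):=s_k^{-2}G_\Omega(\tilde x_k+s_k\zeta,\tilde x_k+s_k\theta)$, $\check\xi_k:=s_k^{-1}(x_k-\tilde x_k)$, $\check\eta_k:=s_k^{-1}(y_k-\tilde x_k)$. Then $|\check\xi_k|=d_{\check\Omega_k}(\check\xi_k)=1$ and $|\check\xi_k-\check\eta_k|=|x_k-y_k|/d(x_k)\to0$, so $\check\xi_k,\check\eta_k$ converge to a common interior point $\check\xi\in\mathcal H$ with $d_{\mathcal H}(\check\xi)=1$. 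The same uniform bound and the same treatment of the $\log s_k$-term, together with the interior estimates, give $\check G_k\to G_{\mathcal H}$ locally uniformly on $\overline{\mathcal H}\times\overline{\mathcal H}$ — now also across the interior diagonal (again via Lemma~\ref{lem:uniqueGreen}) — so $\check G_k(\check\xi_k,\check\eta_k)\to G_{\mathcal H}(\check\xi,\check\xi)=\tfrac1{4\pi}d_{\mathcal H}(\check\xi)^2=\tfrac1{4\pi}>0$. On the other hand the hypothesis forces $\check G_k(\check\xi_k,\check\eta_k)=s_k^{-2}G_\Omega(x_k,y_k)<\tfrac1k\,d(y_k)/d(x_k)\to0$, a contradiction. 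Cases~A and~B being exhaustive, the lemma follows.

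The hardest point, inherited from the proof of Lemma~\ref{lemma:basicestimate1}, will be the near-boundary upgrade from the smooth convergence $G_k\to G_{\mathcal H}$ (and Boggio's explicit formula) to the uniform lower bound $G_k(\xi_k,\eta_k)\ge c_0\,d_{\Omega_k}(\xi_k)^2 d_{\Omega_k}(\eta_k)^2$; the feature new for $n=2$ that demands care is the identification of the blow-up limit, which I would handle by treating the $\log\mu_k$- (resp. $\log s_k$-) term as a biharmonic polynomial added to the regular part, keeping everything bounded via $|G_\Omega|\le C\,d(x)d(y)$, and then invoking the uniqueness statement of Lemma~\ref{lem:uniqueGreen}.
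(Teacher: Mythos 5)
Your proposal follows the same blow-up-by-contradiction strategy as the paper, with the same two-case dichotomy (rescaling by $|x_k-y_k|$ when $d(x_k)/|x_k-y_k|$ stays bounded, by $d(x_k)$ otherwise), the same appeal to Lemma~\ref{lem:uniqueGreen} to identify the half-space limit, and the same use of Boggio's formula to reach a contradiction; so it is essentially the paper's argument. The one place where you take a genuine shortcut is in obtaining the uniform bounds on the rescaled Green functions $G_k$: the paper starts from the scale-invariant Krasovski\u{\i} estimate $|\nabla^3_{\xi,\eta}G_k(\xi,\eta)|\le C/|\xi-\eta|$ and integrates, using the Dirichlet boundary data on $\partial\Omega_k$, to work its way down to bounds on $\nabla^2 G_k$, $\nabla G_k$, and $G_k$ itself; you instead invoke the already-available two-sided bound (upper estimate of Theorem~\ref{thm:optimalestimate} from \cite{DallacquaSweersJDE, Krasovskij1, Krasovskij2} plus the lower bound $G_\Omega\ge -c_1 d(x)^2d(y)^2$, which for $n=2$ is due to \cite{DallacquaMeisterSweers} rather than \cite{GrunauRobert}) to get $|G_\Omega|\le C\,d(x)d(y)$ directly, which rescales to $|G_k|\le C\,d_{\Omega_k}(\zeta)d_{\Omega_k}(\theta)$. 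That is cleaner and perfectly legitimate here, since the upper estimate is external input in this paper. Your explicit treatment of the $\log\mu_k$-term as a biharmonic polynomial $\tfrac{\log\mu_k}{8\pi}|\zeta-\theta|^2$ absorbed into the regular part is also a nice way of making transparent what the paper leaves implicit in its reference to \cite[Lemma~7]{GrunauRobert}. One small point of care: in Case~A, when one of $d_{\mathcal H}(\xi),d_{\mathcal H}(\eta)$ vanishes, the loose bound $G_k(\xi_k,\eta_k)=O(1/k)$ is not enough; you need, as you in fact indicate at the end, the sharper consequence of the hypothesis $G_k(\xi_k,\eta_k)<\tfrac1k\,d_{\Omega_k}(\xi_k)^2 d_{\Omega_k}(\eta_k)^2$ (valid since $ab\min\{1,ab\}\le a^2b^2$), matched against the Boggio-type lower bound $G_k(\xi_k,\eta_k)\ge c_0\,d_{\Omega_k}(\xi_k)^2 d_{\Omega_k}(\eta_k)^2$ obtained from the $C^4$-convergence up to the boundary, exactly as in Lemma~\ref{lemma:basicestimate1}.
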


\begin{proof}
We assume by contradiction that there exist
$x_k,y_k\in \Omega_{x_0,1/k}=\overline{\Omega}\cap B_{1/k}(x_0)$
such that
\begin{equation}\label{eq:contradiction11}
G_{\Omega} (x_k,y_k) < \frac{1}{k}d(x_k) d(y_k)\min\left\{ 1,\frac{ d(x_k) d(y_k)}{|x_k-y_k|^{2}} \right\}.
\end{equation}
In particular we have $x_k\to x_0, y_k\to x_0$, $d(x_k),d(y_k)\to 0$, $|x_k-y_k| \to 0$.
Without loss of generality we may assume that $x_0=0$ and that the first unit vector
$\vec{e}_1$ is the exterior unit normal to $\partial\Omega$ at $x_0$.
After possibly passing to a subsequence it is enough to consider one of the
following two cases.

\bigskip
\noindent
{\it First case: $|x_k-y_k|\ge \frac{1}{2} \max\{d(x_k),d(y_k) \}$.}\\
This proof is  as above for Lemma~\ref{lemma:basicestimate1};
only proving the required uniform bounds for the $G_k$ is slightly
more involved. The arguments are sketched below in the second case.
Thanks to Lemma~\ref{lem:uniqueGreen} the convergence proof of
\cite[Lemma 7]{GrunauRobert} can be extended to $n=2$. One should
observe that also the symmetry carries over to the limit.

\bigskip
\noindent
{\it Second case: $|x_k-y_k|< \frac{1}{2} \max\{d(x_k),d(y_k) \}$.}\\
Observe that in this case
$$
d(x_k) <2 d(y_k)< 4d(x_k)
$$
and
$$
|x_k-y_k|<  \min\{d(x_k),d(y_k) \}.
$$
The assumption gives that
\begin{equation}\label{eq:contradiction12}
 G_{\Omega} (x_k,y_k) < \frac{1}{k}d(x_k) d(y_k)
\end{equation}

In this case we rescale differently, however, $\tilde{x}_k\in \partial \Omega$ denotes again the closest
boundary point to $x_k$. We introduce the rescaled  biharmonic Green functions
$$
G_k (\xi,\eta) := d(x_k)^{-2} G_{\Omega}(\tilde{x}_k+d(x_k) \xi,\tilde{x}_k+d(x_k)\eta)
$$
for
$$
\xi,\eta\in \Omega_k:=\frac{1}{d(x_k)}\left( -\tilde{x}_k+\Omega\right) .
$$
Since $\tilde{x}_k\to 0$, the exterior unit normal at $\partial \Omega$
converges to the first unit vector  and so we conclude that
$$
\Omega_k\to{\mathcal H}:=\{x: x_1<0\} \mbox{\ locally uniformly for \ } k\to \infty.
$$
For
$$
\xi_k=\frac{1}{d(x_k)}(x_k-\tilde{x_k}),\qquad \eta_k=\frac{1}{d(x_k)}(y_k-\tilde{x_k})
$$
the assumption (\ref{eq:contradiction12}) transforms into
\begin{equation}\label{eq:contradiction13}
 G_k (\xi_k,\eta_k) < \frac{1}{k}d_k(\xi_k) d_k(\eta_k)< \frac{2}{k},
\end{equation}
where $d_k:=d(\, . \, ,\partial\Omega_k)$. Since $\xi_k,\eta_k$ are bounded  and their boundary distances
are uniformly bounded
from below by $1/2$ we find after passing to a further
subsequence that $\xi_k\to \xi_\infty \in {\mathcal H},\eta_k\to \eta_\infty \in {\mathcal H}$.
We claim that we have local uniform convergence in ${\mathcal H}\times {\mathcal H}$
 (including the diagonal) of $G_k$ to $G_{\mathcal H}$, since we are in
dimension $n=2$. To see this we observe first that  Krasovski\u{\i}-type estimates
(see \cite{Krasovskij1,Krasovskij2} and also \cite[Theorem 4.20]{GazzolaGrunauSweers})
yield at a first instance useful information only for the third derivatives. We have
$$
|\nabla_{\xi,\eta}^3 G_k (\xi,\eta) |\le \frac{C}{|\xi-\eta|} \mbox{\ uniformly in\ }k.
$$
Making use of
$$
\forall \xi \in \partial \Omega_k, \eta \in \Omega_k:\quad  \nabla_\xi \nabla_\eta G_k (\xi,\eta)=0,\quad
 \nabla^2_\eta G_k (\xi,\eta)=0
$$
and of
$$
\forall \xi \in  \Omega_k, \eta \in \partial\Omega_k: \quad \nabla_\xi \nabla_\eta G_k (\xi,\eta)=0,\quad
 \nabla^2_\xi G_k (\xi,\eta)=0
$$
we obtain upon integration that
$$
|\nabla_{\xi,\eta}^2 G_k (\xi,\eta) |\le C(1+(\log |\xi|)_+ + (\log |\eta|)_++ |\log |\xi - \eta||)
\mbox{\ uniformly in\ }k
$$
and further that
$$
|\nabla_{\xi,\eta} G_k (\xi,\eta) |\le C(1+(\log |\xi|)_+ + (\log |\eta|)_+)(1+ |\xi| +  |\eta|)
\mbox{\ uniformly in\ }k;
$$
$$
| G_k (\xi,\eta) |\le C(1+(\log |\xi|)_+ + (\log |\eta|)_+)(1+ |\xi|^2 +  |\eta|^2)
\mbox{\ uniformly in\ }k.
$$
Now, one may proceed further as in \cite[Lemma 7]{GrunauRobert}.
So, (\ref{eq:contradiction13}) yields $G_{\mathcal H}(\xi_\infty,\eta_\infty)\le 0$,
while Boggio's formula shows that $G_{\mathcal H}(\xi,\eta)> 0$ (even if $\xi=\eta$)
since $\xi,\eta\in {\mathcal H}$ are interior points.
\end{proof}

\begin{lemma}[Estimates in the interior]\label{lem:localestimateinterior}
Suppose that $n=2$ and that $\Omega\subset\mathbb{R}^2$ is a bounded
$C^{4,\gamma}$-smooth domain. Then for each $x_0\in {\Omega}$
there exists a radius $r=r_{x_0}>0$ and a constant $C=C_{x_0}>0$
such that for all $x,y\in \Omega_{x_0,r}:=\overline{\Omega}\cap B_r(x_0)$
 one has
\begin{equation}\label{eq:interiorbehaviour_n=2}
G_{\Omega} (x,y) \ge C d(x) d(y)\min\left\{ 1,\frac{ d(x) d(y)}{|x-y|^{2}} \right\}.
\end{equation}
\end{lemma}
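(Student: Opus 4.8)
The plan is to mimic the structure of the proof of Lemma~\ref{lem:localestimateboundary}, but for an interior point $x_0\in\Omega$ the situation is in fact \emph{simpler}: the boundary distance is bounded below near $x_0$, so no blow-up near a boundary point is needed. First I would fix $r=r_{x_0}>0$ so small that $B_{2r}(x_0)\subset\subset\Omega$; then for all $x,y\in\Omega_{x_0,r}$ one has $d(x),d(y)\ge r$ and also $|x-y|\le 2r$, so that $\min\{1,d(x)d(y)/|x-y|^2\}$ and $d(x)d(y)$ are both comparable to $1$ up to constants depending on $x_0$. Hence the asserted estimate \eqref{eq:interiorbehaviour_n=2} reduces to the claim that $G_\Omega(x,y)\ge C$ for some $C=C_{x_0}>0$, uniformly in $x,y\in\Omega_{x_0,r}$.

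To prove this lower bound I would again argue by contradiction: suppose $x_k,y_k\in\Omega_{x_0,1/k}$ with $G_\Omega(x_k,y_k)<1/k$. Then $x_k,y_k\to x_0$ and we must distinguish whether $|x_k-y_k|$ stays bounded below or tends to $0$. If $|x_k-y_k|\ge \varepsilon_0>0$ along a subsequence, this is impossible for large $k$ because $x_k,y_k$ converge to the same interior point $x_0$ with $|x_0-x_0|=0<\varepsilon_0$, a contradiction already at the level of the domain geometry; more precisely the only relevant case is $|x_k-y_k|\to 0$. In that case I rescale around $x_0$ itself: set $G_k(\xi,\eta):=G_\Omega(x_0+|x_k-y_k|\xi,\,x_0+|x_k-y_k|\eta)$ (no dimensional prefactor is needed since $n=2$ and the fundamental solution scales logarithmically) on the domains $\Omega_k:=|x_k-y_k|^{-1}(-x_0+\Omega)$, which expand to all of $\mathbb{R}^2$ locally uniformly. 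The rescaled points $\xi_k:=|x_k-y_k|^{-1}(x_k-x_0)$ and $\eta_k:=|x_k-y_k|^{-1}(y_k-x_0)$ satisfy $|\xi_k-\eta_k|=1$, and their boundary distances in $\Omega_k$ tend to $+\infty$. As in Lemma~\ref{lem:localestimateboundary}, Krasovski\u{\i}-type estimates together with the harmonicity of $\nabla^3 G_k$ and integration give uniform bounds on $G_k$ and its derivatives of the form $|G_k(\xi,\eta)|\le C(1+(\log|\xi|)_++(\log|\eta|)_+)(1+|\xi|^2+|\eta|^2)$ locally; the role played before by the Dirichlet boundary conditions on $\partial\Omega_k$ is now harmless because that boundary escapes to infinity. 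Then the convergence argument of \cite[Lemma~7]{GrunauRobert}, combined with the uniqueness Lemma~\ref{lem:uniqueGreen} (the whole space $\mathbb{R}^2$ being an even simpler limit than the half-space $\mathcal H$, whose biharmonic Green function is a positive multiple of $|\xi-\eta|^2\log(1/|\xi-\eta|)$ plus lower-order terms near the diagonal), shows $G_k\to G_{\mathbb{R}^2}$ locally uniformly including the diagonal. Passing to the limit in $G_k(\xi_k,\eta_k)<|x_k-y_k|^{-0}\cdot o(1)$ would force $G_{\mathbb{R}^2}(\xi_\infty,\eta_\infty)\le 0$ with $|\xi_\infty-\eta_\infty|=1$, contradicting the strict positivity of the whole-space biharmonic Green function at finite separation.

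The main obstacle is to set up the whole-space limit correctly: one must verify that the growth condition at infinity analogous to \eqref{inftybound} is preserved under the rescaling so that a uniqueness statement (the $\mathbb{R}^2$-analogue of Lemma~\ref{lem:uniqueGreen}) applies and pins down the limit as the standard biharmonic Green function on $\mathbb{R}^2$. Here one has to be slightly careful because $\Delta^2$ on $\mathbb{R}^2$ has no decaying fundamental solution and the ``Green function'' is only determined up to the ambiguity that the uniqueness lemma removes; one checks that the uniform bounds derived from the Krasovski\u{\i} estimates are exactly of the admissible growth order $|\xi|^2(\log|\xi|)_+$, so the same Liouville-type argument (harmonic third derivatives tending to $0$, hence vanishing, hence a polynomial of degree $\le 2$, hence $\equiv$ the explicit Green function by the boundary/symmetry normalisation) goes through. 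Once the limit is identified, positivity of $G_{\mathbb{R}^2}$ at $|\xi-\eta|=1$ is immediate from Boggio-type formula, and the contradiction closes the proof.
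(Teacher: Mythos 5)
Your opening reduction is correct and matches the spirit of the paper: once $B_{2r}(x_0)\subset\subset\Omega$, both $d(x)d(y)$ and $\min\{1,d(x)d(y)/|x-y|^2\}$ are bounded above and below by positive constants on $\Omega_{x_0,r}$, so the claim reduces to a uniform lower bound $G_\Omega(x,y)\ge C>0$ there. The paper then finishes in one line: since $n=2$ the fundamental solution $\frac{1}{8\pi}|x-y|^2\log|x-y|$ extends continuously by $0$ across the diagonal, so $G_\Omega$ is continuous on $\overline\Omega\times\overline\Omega$; Nehari gives $G_\Omega(x_0,x_0)>0$; continuity yields the constant $c>0$ on a small ball. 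That is the whole proof.

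Your contradiction/blow-up argument, however, has a genuine gap that cannot be repaired as stated. There is no biharmonic Green function $G_{\mathbb{R}^2}$ to which the rescaled functions could converge in any canonical way, and, more to the point, the whole-space fundamental solution $\frac{1}{8\pi}|\xi-\eta|^2\log|\xi-\eta|$ \emph{changes sign}: it is negative for $0<|\xi-\eta|<1$, vanishes at $|\xi-\eta|=1$, and is positive only for $|\xi-\eta|>1$. Your final step — ``positivity of $G_{\mathbb{R}^2}$ at $|\xi-\eta|=1$ is immediate from Boggio-type formula'' — is therefore false; Boggio's formula applies to balls and the half-space, not to $\mathbb{R}^n$, and the natural candidate for a limiting kernel at separation $1$ would be $0$, not something positive. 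Moreover, Lemma~\ref{lem:uniqueGreen} pins down the half-space Green function precisely because the Dirichlet data on $\partial\mathcal H$ eliminate the residual biharmonic polynomial; once the rescaled boundary escapes to infinity there is no boundary condition left and the Liouville step no longer singles out a unique limit. Finally, note that with your chosen normalisation (no $|x_k-y_k|^{-2}$ prefactor) the rescaled functions $G_k(\xi,\eta)=G_\Omega(x_0+\rho_k\xi,x_0+\rho_k\eta)$ converge, by the very continuity of $G_\Omega$ used in the paper, to the \emph{constant} $G_\Omega(x_0,x_0)$ — not to a Green function at all; had you tracked this correctly, your contradiction argument would have collapsed back to the paper's one-line proof via Nehari's positivity of $G_\Omega(x_0,x_0)$.
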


\begin{proof}
Since $x_0\in \Omega$, we have that $G_\Omega(x_0,x_0)>0$ (see \cite[p. 115]{Nehari}): since $G_\Omega$
is continuous, there exists $r,c>0$ such that $B_r(x_0)\subset\subset\Omega$ and $G_\Omega(x,y)>c$ for
all $x,y\in B_r(x_0)$. This yields \eqref{eq:interiorbehaviour_n=2} since $\Omega$ is bounded.
\end{proof}

\section{Proof of the main estimate for $n=2$}
\label{sect:mainproof_n=2}

Combining Lemmas \ref{lem:localestimateboundary} and \ref{lem:localestimateinterior} and applying a compactness argument
to
$$
\overline{\Omega}=\bigcup_{x_0\in \overline{\Omega}} \Omega_{x_0,r_{x_0}/2}
$$
we find:

\begin{corollary}\label{lem:2uni}Suppose that $n=2$ and that $\Omega\subset\mathbb{R}^2$ is a bounded
$C^{4,\gamma}$-smooth domain. Then
there exists a radius $r>0$ and a constant $C>0$
such that for all $x_0\in\overline{\Omega}$ and for all $x,y\in \Omega_{x_0,r}:=\overline{\Omega}\cap B_r(x_0)$
 one has
\begin{equation}
G_{\Omega} (x,y) \ge C d(x) d(y)\min\left\{ 1,\frac{ d(x) d(y)}{|x-y|^{2}} \right\}.
\end{equation}
\end{corollary}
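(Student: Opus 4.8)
The plan is to deduce the uniform statement from the pointwise‑in‑$x_0$ estimates of Lemmas~\ref{lem:localestimateboundary} and~\ref{lem:localestimateinterior} by a standard finite‑covering (Lebesgue‑number) argument on the compact set $\overline{\Omega}$.

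First I would record that, applying Lemma~\ref{lem:localestimateboundary} when $x_0\in\partial\Omega$ and Lemma~\ref{lem:localestimateinterior} when $x_0\in\Omega$, to every $x_0\in\overline{\Omega}$ there is associated a radius $r_{x_0}>0$ and a constant $C_{x_0}>0$ such that
$$
G_{\Omega}(x,y)\ge C_{x_0}\,d(x)d(y)\min\left\{1,\frac{d(x)d(y)}{|x-y|^{2}}\right\}\qquad\text{for all }x,y\in\Omega_{x_0,r_{x_0}}.
$$
The open balls $\{B_{r_{x_0}/2}(x_0)\}_{x_0\in\overline{\Omega}}$ cover $\overline{\Omega}$, which is compact, so finitely many of them suffice: there exist $x_1,\dots,x_N\in\overline{\Omega}$ with $\overline{\Omega}=\bigcup_{j=1}^{N}B_{r_{x_j}/2}(x_j)$.

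Next I would set $r:=\tfrac12\min_{1\le j\le N}r_{x_j}>0$ and $C:=\min_{1\le j\le N}C_{x_j}>0$ and verify that these choices work. Given an arbitrary $x_0\in\overline{\Omega}$, pick an index $j$ with $x_0\in B_{r_{x_j}/2}(x_j)$; then for every $z\in B_r(x_0)$ the triangle inequality yields $|z-x_j|\le|z-x_0|+|x_0-x_j|<r+\tfrac12 r_{x_j}\le r_{x_j}$, so $B_r(x_0)\subset B_{r_{x_j}}(x_j)$ and hence $\Omega_{x_0,r}\subset\Omega_{x_j,r_{x_j}}$. Consequently any $x,y\in\Omega_{x_0,r}$ lie in $\Omega_{x_j,r_{x_j}}$, so the estimate above holds with the pair $(r_{x_j},C_{x_j})$, and since $C_{x_j}\ge C$ we obtain the claimed bound with the $x_0$‑independent constant $C$.

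There is essentially no analytic obstacle in this step: all the substance sits in Lemmas~\ref{lem:localestimateboundary} and~\ref{lem:localestimateinterior}, and the corollary is a pure compactness argument. The only point that needs a little care is to shrink the common radius by the factor $\tfrac12$ relative to the finite subcover, so that $B_r(x_0)$ is genuinely contained in one of the balls $B_{r_{x_j}}(x_j)$; without this one could not guarantee that both $x$ and $y$ sit in a single admissible neighbourhood $\Omega_{x_j,r_{x_j}}$ on which one of the two lemmas applies.
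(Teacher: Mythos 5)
Your argument is exactly the paper's: the paper also covers $\overline{\Omega}$ by the sets $\Omega_{x_0,r_{x_0}/2}$, extracts a finite subcover by compactness, and takes the minima of the radii and constants, and your detailed verification (halving the radius so that $B_r(x_0)\subset B_{r_{x_j}}(x_j)$) just makes explicit what the paper leaves implicit. The proposal is correct and takes the same route.
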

This step provides in particular a different and simpler proof for the local
positivity statement from \cite[Theorem 6.15]{GazzolaGrunauSweers}
which was proved first by Dall'Acqua, Meister, and Sweers \cite{DallacquaMeisterSweers}.

\smallskip\noindent{\it Proof of Theorem \ref{thm:optimalestimate}}: If $|x-y|\ge r$  we take from \cite{DallacquaSweersJDE}, see also \cite{GazzolaGrunauSweers},
that
$
G_{\Omega} (x,y) \ge -c_8 |x-y|^{-2} d(x)^2 d(y)^2
$
so that
\begin{gather*}
G_{\Omega} (x,y) +2 c_8 |x-y|^{-2} d(x)^2 d(y)^2 \ge c_8 |x-y|^{-2} d(x)^2 d(y)^2 \\
\ge c_8 d(x)d(y)\min\left\{ 1,\frac{ d(x) d(y)}{|x-y|^{2}} \right\}.
\end{gather*}
Since $|x-y|^{-2}\le r^{-2}$ we end up with
$$
G_{\Omega} (x,y) + c_9  d(x)^2 d(y)^2 \ge c_8 d(x)d(y)\min\left\{ 1,\frac{ d(x) d(y)}{|x-y|^{2}} \right\}.
$$
and  positive constants $c_8,c_9>0$ in this case. The proof of Theorem \ref{thm:optimalestimate} is now complete
also for $n=2$ using Corollary \ref{lem:2uni}. \hfill$\square$

\section{Proof of Theorem \ref{the:unipos}}
This theorem was proved in \cite{GrunauRobert} when $n\geq 3$
and in \cite[Theorem 6.15]{GazzolaGrunauSweers} when $n =2$. The latter
proof is quite involved and based on the extensive use of conformal
maps and explicit Green functions in certain lima\c{c}ons.
We provide here an alternate proof
which uses the same techniques for $n =2$ as for  $n\geq 3$.

\smallskip Case I: $d(x)d(y)\le \left|x-y\right|^2$. For this situation we have
\begin{equation}
H_{\Omega} (x,y)=\left\{ \begin{array}{l}
\displaystyle   |x-y|^{-n} d(x)^2 d(y)^2 \hfill \mbox{\ if\ } n \ne 4,
\\[3mm]
\displaystyle   \log\left(1+\frac{d(x)^2d(y)^2}{|x-y|^4} \right)\hspace{5mm}\hfill \mbox{\ if\ } n=4.
\end{array}
\right.\label{Hbis}
\end{equation}
Then there is $c>0$ such that we find $c_2^{-1} H_{\Omega}(x,y)\ge c_1 d(x)^2d(y)^2$ for $\left|x-y\right|<c$.

Case II: $d(x)d(y) > \left|x-y\right|^2$. Now we have
\begin{equation}
H_{\Omega} (x,y)=\left\{ \begin{array}{l}
\displaystyle   |x-y|^{4-n}  \hfill \mbox{\ if\ } n > 4,
\\[2mm]
\displaystyle   \log\left(1+\frac{d(x)^2d(y)^2}{|x-y|^4} \right)\hspace{5mm}\hfill \mbox{\ if\ } n=4,
\\[4mm]
\displaystyle   d(x)^{2-n/2}d(y)^{2-n/2}\hspace{5mm}\hfill \mbox{\ if\ } n=2,3.
\end{array}
\right.\label{Hbbis}
\end{equation}
Since $d(x)d(y)$ is bounded on $\Omega$ one finds for $n\ge 4$ the existence of $c>0$ such that $c_2^{-1} H_{\Omega}(x,y)\ge c_1 d(x)^2d(y)^2$ for $\left|x-y\right|<c$. For dimension $n=3$ the argument is more subtle. We fix $\delta_n$ as in Proposition \ref{NehariTheorem1}. Taking $d(x)d(y)<\varepsilon^2\frac{1+\delta_n}{2}$ for $\varepsilon>0$ but sufficiently small it follows that $c_2^{-1} H_{\Omega}(x,y)\ge c_1 d(x)^2d(y)^2$. It remains to consider $d(x)d(y)\ge \frac{1+\delta_n}{2}\varepsilon^2$. Assume first that $\left|x-y\right|<\frac{\delta_n}{2} \varepsilon$ and $d(x)<\frac12 \varepsilon$. Then $d(x)d(y)<\frac{1+\delta_n}{2}\varepsilon^2$ and we are in the situation just considered. So we are left with $\left|x-y\right|<\frac{\delta_n}{2}\varepsilon$ and $d(x)\ge \frac12  \varepsilon$. Then we may apply Theorem 1 of \cite{GrunauSweers3}, see also Proposition \ref{NehariTheorem1}, to find that for $\left|x-y\right|<\frac12 \delta_n \varepsilon\le \delta_n \max(d(x),d(y))$ it follows that $G_{\Omega}(x,y)>0$.
For dimension $n=2$ the result follows directly from Corollary \ref{lem:2uni}.

\bigskip

\end{document}